\newcommand{\p}{\rho}
\newcommand{\Fin}{\mbox{Fin}(\nats)}
\newcommand{\Zcal}{\mathcal{Z}}
\newcommand{\F}{\mathcal{F}}
\newcommand{\larr}{\left( \begin{array}{c}}
\newcommand{\rarr}{\end{array} \right) }
\newcommand{\lsqarr}{\left[ \begin{array}{c}}
\newcommand{\rsqarr}{\end{array} \right]}
\def\Ddots{\mathinner{\mkern1mu\raise\p@
\vbox{\kern7\p@\hbox{.}}\mkern2mu
\raise4\p@\hbox{.}\mkern2mu\raise7\p@\hbox{.}\mkern1mu}}
\newcommand{\reals}{{\mathbb R}}
\newcommand{\nats}{{\mathbb N}}
\newcommand{\ints}{{\mathbb Z}}
\newtheorem{thm}{Theorem}
\newtheorem{theorem}{Theorem}[section]
\newtheorem{lemma}[theorem]{Lemma}
\newtheorem{corollary}[theorem]{Corollary}
\newtheorem{proposition}[theorem]{Proposition}
\newtheorem{remark}[theorem]{Remark}
\newtheorem{definition}[theorem]{Definition}
\theoremstyle{definition}
\begin{document}

\title[Central sets generated by uniformly recurrent words]{Central sets generated by uniformly recurrent words}

\author[M. Bucci]{Michelangelo Bucci}
\address{Department of Mathematics\\
FUNDIM\\
University of Turku\\
FIN-20014 Turku, Finland.}
\email{michelangelo.bucci@utu.fi \\ micbucci@unina.it}

\author[S. Puzynina]{Svetlana Puzynina}
\address{Department of Mathematics\\
FUNDIM\\
University of Turku\\
FIN-20014 Turku, Finland. Also, Sobolev Institute of Mathematics \\4 Acad. Koptyug avenue \\630090 Novosibirsk Russia}
\email{svetlana.puzyina@utu.fi}

\author[L.Q. Zamboni]{Luca Q. Zamboni}
\address{Universit\'e de Lyon\\
Universit\'e Lyon 1\\
CNRS UMR 5208\\
Institut Camille Jordan\\
43 boulevard du 11 novembre 1918\\
F69622 Villeurbanne Cedex, France. Also,
Department of Mathematics\\
FUNDIM\\
University of Turku\\
FIN-20014 Turku, Finland.}
\email{zamboni@math.univ-lyon1.fr\\
luca.zamboni@utu.fi}

\keywords{Sturmian words, Stone-\v Cech compactification, IP-sets, and central sets.}
\subjclass[2000]{Primary 68R15 \& 05D10}
\date{April 30, 2012}

\maketitle

\begin{abstract}
A subset $A$ of $\nats$ is called an  IP-set if $A$ contains all finite sums of distinct terms of some infinite sequence $(x_n)_{n\in \nats} $ of natural numbers. Central sets, first introduced by  Furstenberg using notions from topological dynamics, constitute a special class of IP-sets possessing rich combinatorial properties: Each central set contains arbitrarily long arithmetic progressions, and solutions to all partition regular systems of homogeneous
linear equations.    In this paper we investigate central sets in the framework of combinatorics on words. Using various families of uniformly recurrent words, including  Sturmian words, the Thue-Morse word and fixed points of weak mixing substitutions, we generate an assortment of central sets which reflect the rich combinatorial structure of the underlying words. The results in this paper rely on interactions between different areas of mathematics, some of which had not previously been directly linked. They include the general theory of combinatorics on words,  abstract numeration systems, and the beautiful theory, developed by Hindman, Strauss and others, linking  IP-sets and central sets to the algebraic/topological properties of the  Stone-\v Cech compactification of  $\nats .$\end{abstract}

\section{Introduction}

Let $\nats =\{0,1,2,3,\ldots\}$ denote the set of natural numbers, and $\Fin$ the set of all non-empty finite subsets of $\nats.$
\begin{definition}
A subset $A$ of $\nats$ is called an {\it IP-set} if $A$ contains $\{\sum _{n\in F}x_n \,| \,F\in \Fin\}$ for some infinite sequence of natural numbers $x_0<x_1<x_2 \cdots .$ A subset $A\subseteq \nats$  is called an {\it IP$^*$-set} if $A\cap B\neq \emptyset $ for every IP-set $B\subseteq \nats.$
\end{definition}
By a celebrated result of N. Hindman \cite{H}, given any finite partition of $\nats,$ at least one element of the partition is an IP-set. It follows from Hindman's theorem that every IP$^*$-set is an IP-set, but the converse is in general not true.  In fact, more generally Hindman shows that given any finite partition of an IP-set, at least one element of the partition is again an IP-set. In other words the property of being an IP-set is {\it partition regular}, i.e., cannot be destroyed via a finite partitioning. Other examples of partition regularity are given by the pigeonhole principle, sets having positive upper density, and sets having arbitrarily long arithmetic progressions (Van der Waerden's theorem). In \cite{F}, Furstenberg introduced a special class of IP-sets, called central sets, having a substantial combinatorial structure. The property of being central is also partition regular. Central sets were originally defined in terms of  topological dynamics:

\begin{definition}\label{Cen2} A subset $A\subset \nats$ is called {\it central} if there exists a compact metric space $(X,d)$ and a continuous map $T: X\rightarrow X,$ points $x,y \in X$ and a neighborhood $U$ of $y$ such that
\begin{itemize}
\item $y$ is a uniformly recurrent point in $X,$
\item $x$ and $y$ are proximal,
\item $A=\{n \in \nats\,|\, T^n(x)\in U\}.$
\end{itemize}
 We say $A\subset \nats$ is {\it central}$^*$ if  $A\cap B\neq \emptyset $ for every central set $B\subseteq \nats.$
\end{definition}

\noindent Recall that $x$ is said to be {\it uniformly recurrent} in $X$ if for every neighborhood $V$ of $x$ the set
$\{n\,|\, T^n(x)\in V\}$ is syndetic, i.e., of bounded gap. Two points $x,y\in X$ are said to be {\it proximal} if for every $\epsilon >0$ there exists $n\in \nats$ such that $d(T^n(x),T^n(y))<\epsilon.$   We remark that from the above definition, it is not at all evident that central sets are IP-sets. We later give an alternative definition (see Definition~\ref{Cen1}) which makes this point clear. The equivalence between the two definitions is due to Bergelson and Hindman \cite{BH}.\\

The question of determining whether a given subset $A\subseteq \nats$ is an IP-set or a central set is typically quite difficult, even if for every $A,$ either $A$ or its complement is an IP-set (resp. central set). It turns out that in each case this question may be reformulated in terms of whether or not the set $A$ belongs to a certain class of ultrafilters on $\nats$ (see Theorem 5.12 in \cite{HS} in the case of IP-sets and \cite{BH} in the case of central sets). But the question of belonging or not to a given (non-principal) ultrafilter is  generally equally mysterious.
An equivalent  word combinatorial reformulation of this question is as follows: Given a binary word $\omega =\omega_0\omega_1\omega_2\ldots \in \{0,1\}^\infty,$ put $\omega \big|_0=\{n\in \nats \,| \,\omega_n=0\}$ and $\omega \big|_1=\{n\in \nats \,| \,\omega_n=1\}.$  The question is then to determine whether the set $\omega \big|_0$ or $\omega \big|_1$ is an IP-set or central set. Of course in general, this reformulation is as difficult as the original question. However, should the word $\omega$ be characterized by some rich combinatorial properties, or be  generated by some ``simple" combinatorial or geometric algorithm (such as a substitution rule, a finite state automaton,  a Toeplitz rule...) or arise as a natural coding of a reasonably simple symbolic dynamical system, then the underlying rigid combinatorial structure of the word may provide insight to our previous question. Furthermore,  such families of  words may be used to obtain simple constructions of central sets having additional nice properties inherited from the rich underlying combinatorial structure. One of our objectives here is to illustrate this latter point.\\

Let $\mathcal A$ denote a finite non-empty set (called the alphabet) and $\omega =\omega_0\omega_1\omega_2\ldots\in {\mathcal A}^{\nats}.$
For each finite word $u$ on the alphabet ${\mathcal A}$ we set
\[\omega\big|_{u}=\{ n\in \nats \,|\, \omega_n\omega_{n+1}\ldots \omega_{n+|u|-1}=u\}.\]
In other words, $\omega\big|_u$ denotes the set of all occurrences of $u$ in $\omega.$

\noindent In this paper we investigate partitions of $\nats$ by sets of the form $\omega\big|_{u}$ defined by a uniformly recurrent word $\omega.$  Our goal is to study these partitions  in the framework of IP-sets and central sets.   We begin by showing that in this framework IP-sets and central sets are one and the same:

\begin{thm} Let $\omega \in {\mathcal A}^{\nats}$ be uniformly recurrent. Then the set
$\omega\big|_u$ is an IP-set if and only if it is a central set.
\end{thm}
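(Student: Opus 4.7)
The plan is to exploit the ultrafilter formulation of both notions (Definition~\ref{Cen1} of the paper, due to Bergelson--Hindman \cite{BH}): $A\subseteq\nats$ is an IP-set iff $A\in p$ for some idempotent $p\in(\beta\nats,+)$, and $A$ is central iff $A\in e$ for some \emph{minimal} idempotent $e$. The direction ``central $\Rightarrow$ IP'' is then immediate. For the converse I would realize $\omega|_u$ dynamically inside the orbit closure $X=\overline{\{T^n\omega:n\in\nats\}}\subseteq\A^\nats$ under the shift $T$; since $\omega$ is uniformly recurrent, $X$ is minimal, and the cylinder $[u]\subseteq X$ is clopen. Under the canonical action $p\cdot x:=\lim_{n\to p}T^n x$ of $\beta\nats$ on $X$ one has $\omega|_u\in p$ iff $p\cdot\omega\in[u]$.

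Assuming $\omega|_u$ is an IP-set (the case where $u$ is not a factor of $\omega$ is vacuous, as $\omega|_u=\emptyset$ is neither IP nor central), fix an idempotent $p\in\beta\nats$ with $\omega|_u\in p$, and set $y:=p\cdot\omega\in[u]$. The key observation is that $y$ is proximal to $\omega$: using $p+p=p$,
\[
p\cdot y=p\cdot(p\cdot\omega)=(p+p)\cdot\omega=p\cdot\omega=y,
\]
so $p\cdot\omega=p\cdot y$, and by the standard $\beta\nats$ characterization of proximality (two points $x_1,x_2$ are proximal iff $q\cdot x_1=q\cdot x_2$ for some $q\in\beta\nats$) this exhibits $\omega$ and $y$ as proximal. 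Moreover $y$ is uniformly recurrent because $X$ is minimal.

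To conclude I would invoke the classical theorem from the topological algebra of $\beta\nats$ (see, e.g., \cite{HS}): if $y\in X$ is uniformly recurrent and proximal to $\omega$, then $y=e\cdot\omega$ for some minimal idempotent $e\in\beta\nats$. Since $y\in[u]$ this yields $e\cdot\omega\in[u]$, hence $\omega|_u\in e$, so $\omega|_u$ is central.

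The main obstacle is precisely this last invocation: extracting a minimal idempotent $e$ with $e\cdot\omega=y$ from the mere data that $y$ is uniformly recurrent and proximal to $\omega$ uses the internal structure of the minimal ideal $K(\beta\nats)$ (every minimal left ideal acts transitively on any minimal subsystem, together with the existence of idempotents in closed subsemigroups of $\beta\nats$). It is not immediate from the definitions, but it is classical, so I would cite it rather than reprove it; once it is in hand the chain of implications above is essentially formal.
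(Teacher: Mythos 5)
Your proposal is correct and follows essentially the same route as the paper: set $y=p^*(\omega)$ for an idempotent $p$ containing $\omega\big|_u$, observe $y$ is uniformly recurrent and proximal to $\omega$, and invoke Theorem~19.26 of \cite{HS} (Theorem~\ref{Berg}) to replace $p$ by a minimal idempotent. The only cosmetic difference is that you certify proximality via the classical $\beta\nats$ characterization ($q\cdot x_1=q\cdot x_2$ for some $q$), whereas the paper checks it directly from the identity $p^*(p^*(\omega))=p^*(\omega)$ by intersecting the $p$-large sets $\omega\big|_v$ and $p^*(\omega)\big|_v$ for prefixes $v$ of $p^*(\omega)$ -- the same underlying computation.
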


\noindent This allows us to simultaneously state our results in terms of IP-sets and central sets.\\

We begin by considering the {\it simplest} aperiodic infinite words, namely  Sturmian words.
Sturmian words are infinite words over a binary alphabet having exactly $n+1$ factors of length
$n$ for each $n \geq 0.$ Their origin can be traced back to the astronomer J. Bernoulli  III in 1772.  A fundamental result due to Morse and Hedlund \cite{MorHed1940} states that each aperiodic (meaning non-ultimately periodic) infinite word must contain at least $n+1$ factors of each length $n\geq 0.$  Thus Sturmian words are those aperiodic words of lowest factor complexity.  They arise naturally in many different areas of mathematics including combinatorics, algebra, number theory, ergodic theory, dynamical systems and differential equations. Sturmian words are also of great importance in theoretical physics and in theoretical computer science and are used in
computer graphics as digital approximation of straight lines. \\

The next two theorems give a complete characterization of those factors 
$u$  of a Sturmian word $\omega\in\{0,1\}^\nats$ for which $\omega \big|_u$ is an IP-set (respectively central set). 
First,  a Sturmian word $\omega $ is called {\it singular} if $T^n(\omega)= \tilde \omega$ for some $n\geq 1,$ where $T$ denotes the shift map and $\tilde \omega$ denotes the characteristic Sturmian word in the shift orbit closure of $\omega$ (see \S\ref{sss} for the definition of a characteristic Sturmian word).  Otherwise it is said to be  {\it nonsingular.}

 \begin{thm}\label{sturm1} Let $\omega \in \Omega$ be a nonsingular Sturmian word, and $u$ a factor of $\omega.$  Then $\omega \big|_u$ is an IP-set (resp. central set)  if and only if $u$ is a prefix of $\omega.$  Hence for every prefix $v$ of $\omega$
 and $n\in \omega \big|_v$ the set $\omega \big|_v-n$ is an IP$^*$-set (resp. central$^*$ set).\end{thm}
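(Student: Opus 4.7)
The plan is to exploit the classical rotational coding of a Sturmian word: there exist an irrational $\alpha\in(0,1)$ and a starting point $\rho\in\T=\R/\Z$ such that the orbit $(n\alpha+\rho)_{n\ge0}\subset\T$, coded against a partition $\T=\bigsqcup_{u}I_u$ into arcs indexed by the factors of $\omega$ of length $|u|$, recovers $\omega$, with the characterization $n\in\omega\big|_u\iff n\alpha+\rho\in I_u$. Nonsingularity of $\omega$ translates precisely to the orbit of $\rho$ avoiding all partition endpoints, so $\rho$ always lies in the interior of the arc it meets; in particular, if $u$ is a factor but not a prefix of $\omega$, then $\rho\notin\overline{I_u}$. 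The ``if'' direction is then immediate from Definition~\ref{Cen2}: with $X=\overline{\{T^k\omega:k\ge0\}}$, $T$ the shift, and $x=y=\omega$, the point $y$ is uniformly recurrent, $x$ and $y$ are trivially proximal, and the cylinder $[u]$ of infinite words starting with $u$ is a neighborhood of $\omega$ whose visit set is exactly $\omega\big|_u$. Thus $\omega\big|_u$ is central, and hence IP by the preceding theorem.

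For the ``only if'' direction, assume $u$ is a factor but not a prefix, and suppose for contradiction that $\omega\big|_u\supseteq\{\sum_{i\in F}x_i:F\in\Fin\}$ for some increasing sequence $x_1<x_2<\cdots$ in $\nats$. Set $y_n:=x_n\alpha\in\T$ and, by compactness, extract $y_{n_k}\to y^*\in\T$. For any $m\ge1$ and $\varepsilon>0$ we may choose $k_1<\cdots<k_m$ so large that each $y_{n_{k_i}}$ lies within $\varepsilon/m$ of $y^*$; then $\sum_{i=1}^{m}y_{n_{k_i}}$ lies within $\varepsilon$ of $my^*$ in $\T$, and since $\sum_i x_{n_{k_i}}\in\omega\big|_u$ the point $\rho+\sum_i y_{n_{k_i}}$ lies in $I_u$. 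Letting $\varepsilon\to 0$ yields $\rho+my^*\in\overline{I_u}$ for every $m\ge1$. We split on $y^*$: if $y^*=0$, then $\rho\in\overline{I_u}$; if $y^*=p/q\in\T$ is nonzero rational, taking $m=q$ gives $qy^*=0$ and again $\rho\in\overline{I_u}$; and if $y^*$ is irrational, then $(my^*)_{m\ge1}$ is dense in $\T$, forcing the proper closed arc $\overline{I_u}$ to equal $\T$. Each case contradicts $\rho\notin\overline{I_u}$.

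The IP$^*$ statement comes by repackaging the same argument. For $v$ a prefix of $\omega$ and $n\in\omega\big|_v$, the word $T^n\omega$ is a nonsingular Sturmian word with rotation data $(\alpha,\rho+n\alpha)$, and
\[
\omega\big|_v - n \;=\; (T^n\omega)\big|_v \;=\; \{k\in\nats : k\alpha\in U\},
\]
where $U:=I_v-(\rho+n\alpha)$ is an open arc in $\T$ containing $0$ in its interior. If some IP-set $A$ missed $(T^n\omega)\big|_v$, then $\{k\alpha:k\in A\}$ would lie in the closed proper arc $\T\setminus U$, and the identical three-case analysis on a limit point $y^*$ of $\{x_n\alpha\}$ for IP-generators $(x_n)$ of $A$ would force $0\in\T\setminus U$, a contradiction. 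Hence every IP-set meets $\omega\big|_v-n$, so this set is IP$^*$, and therefore central$^*$. The delicate step in the plan is the rational-nonzero case of $y^*$, handled by the observation that $qy^*=0$ in $\T$ reduces matters to the case $y^*=0$; nonsingularity is precisely what supplies the strict separation ($\rho\notin\overline{I_u}$, respectively $0\in\mathrm{int}(U)$) that drives every contradiction.
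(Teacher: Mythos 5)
Your argument is correct, but it proves the theorem by a genuinely different route than the paper. The paper's proof is ultrafilter-theoretic: it first establishes the proximality dichotomy for Sturmian words (Lemma~\ref{key}, itself proved via the rotation representation), deduces that $p^*(\omega)=\omega$ for every idempotent $p\in\beta\nats$ when $\omega$ is nonsingular (Lemma~\ref{fix}, using $p^*(p^*(\omega))=p^*(\omega)$), and then reads off both directions at once: prefixes give sets lying in \emph{every} idempotent (hence IP$^*$), while a non-prefix $v$ cannot give an IP-set since membership in some idempotent $p$ would force $v$ to be a prefix of $p^*(\omega)=\omega$. You instead (i) get the ``if'' direction for centrality directly from Definition~\ref{Cen2} with $x=y=\omega$ (return times of a uniformly recurrent point to a cylinder neighborhood of itself), and (ii) handle the ``only if'' direction and the IP$^*$ claim by a bare-hands finite-sums argument on the circle: if $FS((x_i))\subseteq\omega\big|_u$, a limit point $y^*$ of $(x_i\alpha)$ forces $\rho+my^*\in\overline{I_u}$ for all $m$, and the three cases $y^*=0$, $y^*$ rational nonzero, $y^*$ irrational each contradict the strict separation $\rho\notin\overline{I_u}$ supplied by nonsingularity; this is essentially a clean generalization of the paper's own Section~4 computation for the Fibonacci word, where the role of your limit point is played by an explicit subdivision of the interval $I_1$. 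What each approach buys: the paper's Lemma~\ref{fix} is reused for the singular case (Theorem~\ref{sturm2}, via Lemma~\ref{choice}) and for Corollary~\ref{C2} on intersections of words of different slopes, so the ultrafilter route amortizes well across the section, whereas your proof is more elementary and self-contained, needing $\beta\nats$ only to pass between IP and central (via the fact that central sets are IP-sets, and Theorem~\ref{IPC} or your $n=0$ case of the IP$^*$ argument for the converse strength). Two small imprecisions, neither affecting correctness: the arc $U=I_v-(\rho+n\alpha)$ is half-open rather than open (what matters, and what you correctly extract from nonsingularity, is $0\in\mathrm{int}(U)$), and the contradiction in the IP$^*$ step should be stated as $0\in\overline{\T\setminus U}$, i.e.\ $0\notin\mathrm{int}(U)$, rather than $0\in\T\setminus U$.
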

 
 \begin{thm}\label{sturm2} Let $\omega \in \Omega$ be a Sturmian word such that $T^{n_0}(\omega)=\tilde \omega$ with $n_0\geq1.$  Then $\omega\big|_u$ is an IP-set (or central set) if and only if either $u$ is a prefix of $\omega$ or a prefix of $\omega'$ where  $\omega' $ is the unique other element of $\Omega$ with $T^{n_0}(\omega')=\tilde \omega.$
\end{thm}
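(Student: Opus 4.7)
My plan is to recast the question in the language of the Stone-\v{C}ech compactification $\beta\nats$ and the topological dynamics of the Sturmian subshift. Granting the first theorem of the paper (for uniformly recurrent $\omega$, IP iff central), the set $\omega\big|_u$ is an IP-set iff there exists a minimal idempotent $p \in (\beta\nats,+)$ with $\omega\big|_u \in p$. For each $p \in \beta\nats$ let $\omega^p \in \Omega$ denote the $p$-limit of $T^n(\omega)$: the unique element of $\Omega$ characterized by $\omega\big|_v \in p$ for every prefix $v$ of $\omega^p$. A short unwinding of definitions yields
\[
\omega\big|_u \in p \iff u \text{ is a prefix of } \omega^p,
\]
so the theorem reduces to proving that $\{\omega^p : p \text{ a minimal idempotent of } \beta\nats\} = \{\omega,\omega'\}$.

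The classical Auslander-Ellis theorem in topological dynamics identifies, for any minimal system $(X,T)$ and any $x \in X$, the set $\{p\cdot x : p \text{ a minimal idempotent of } \beta\nats\}$ with the proximal cell of $x$. For the Sturmian subshift $(\Omega,T)$, the factor map $\pi : \Omega \to \T$ onto the underlying irrational rotation by $\alpha$ is one-to-one off the countable orbit of $0 \in \T$; the fibers above that orbit have exactly two elements, and two points of $\Omega$ are proximal iff they share the same $\pi$-fiber. The hypothesis $T^{n_0}(\omega) = \tilde\omega$ places $\omega$ above the exceptional point $-n_0\alpha \in \T$, whose fiber is precisely $\{\omega,\omega'\}$ with $\omega'$ the other $T^{n_0}$-preimage of $\tilde\omega$. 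This gives both implications of Theorem~\ref{sturm2}.

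The main obstacle is to verify that \emph{both} $\omega$ and $\omega'$ are realized as $\omega^p$ for some minimal idempotent $p$. The case $\omega^p = \omega$ is immediate: the closed subset $\bigcap_k \overline{\omega\big|_{\omega_0\cdots\omega_{k-1}}} \subseteq \beta\nats$ (nonempty by uniform recurrence of $\omega$) meets some minimal left ideal of $\beta\nats$ in an idempotent by Ellis's lemma. For $\omega'$ one exploits that $\omega$ and $\omega'$ share the same language and $\omega'$ is uniformly recurrent, so each $\omega\big|_{\omega'_0\cdots\omega'_{k-1}}$ is syndetic in $\nats$; a completely parallel application of Ellis's lemma produces a minimal idempotent $p$ with $\omega^p = \omega'$. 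This translation between proximality and idempotent ultrafilter membership, where the specific fiber structure of the Sturmian subshift enters crucially, is the technical heart of the argument.
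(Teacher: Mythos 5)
Your overall strategy (reduce via Theorem~\ref{IPC} and Lemma~\ref{IP} to identifying $\{p^*(\omega): p \text{ a minimal idempotent}\}$, then use the proximality structure of the Sturmian subshift) is the same as the paper's, and the forward direction is fine: the fact that $p^*(\omega)$ is proximal to $\omega$ for idempotent $p$, together with the statement that the proximal cell of $\omega$ is exactly the two-point fiber $\{\omega,\omega'\}$ over the rotation factor, is precisely what the paper establishes in Lemmas~\ref{key} and \ref{unique} (you assert the proximal-iff-same-fiber fact rather than prove it, but it is the standard fact the paper's Lemma~\ref{key} verifies). Your realization of $\omega$ itself, via the closed set $C=\bigcap_k \overline{\omega\big|_{\omega_0\cdots\omega_{k-1}}}$, also works: $C$ is a subsemigroup because $(p+q)^*(\omega)=q^*(p^*(\omega))=q^*(\omega)$ when $p^*(\omega)=\omega$ (Lemma~\ref{idempotent}), syndeticity puts a point of each $\overline{\omega\big|_{v}}$ in any fixed minimal left ideal $L$, and compactness plus Ellis gives a minimal idempotent in $C\cap L$.

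The genuine gap is in the ``completely parallel'' argument for $\omega'$. The set $C'=\bigcap_k \overline{\omega\big|_{\omega'_0\cdots\omega'_{k-1}}}=\{p: p^*(\omega)=\omega'\}$ is \emph{not} obviously a subsemigroup: for $p,q\in C'$ one gets $(p+q)^*(\omega)=q^*(p^*(\omega))=q^*(\omega')$, and nothing in the hypotheses you invoke (same language, uniform recurrence, syndeticity of the occurrence sets) controls $q^*(\omega')$. So Ellis's lemma does not apply, and this is not a technicality: if same language plus syndeticity sufficed, the identical argument would produce, for \emph{any} uniformly recurrent $\omega$ and any $\omega'$ in its subshift, a minimal idempotent $p$ with $p^*(\omega)=\omega'$, hence would make $\omega\big|_u$ central for every factor $u$ --- contradicting Theorem~\ref{sturm1}. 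The step where the hypothesis $T^{n_0}(\omega)=T^{n_0}(\omega')=\tilde\omega$ must enter is exactly here, and you never use it. Two easy repairs: (a) as in the paper, observe that $\omega$ and $\omega'$ agree from position $n_0$ on, hence are proximal, and $\omega'$ is uniformly recurrent, so Theorem~\ref{Berg} (Auslander--Ellis, which you already cite for the other direction) directly yields a minimal idempotent $p_2$ with $p_2^*(\omega)=\omega'$; or (b) within your Ellis-lemma framework, first note that $\omega\big|_u$ and $\omega'\big|_u$ differ only on $[0,n_0)$, so $p^*(\omega)=p^*(\omega')$ for every non-principal $p$; then run your $C$-argument with $\omega'$ in place of $\omega$ to get a minimal idempotent $q$ with $q^*(\omega')=\omega'$, and conclude $q^*(\omega)=q^*(\omega')=\omega'$.
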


\noindent Some (but not all) of the results on Sturmian partitions extend to the class of  Arnoux-Rauzy words, which may be regarded as natural combinatorial extensions of Sturmian words to larger alphabets \cite{ArRa}. \\

\noindent Using $\omega$-bonacci and the iterated palindromic closure  operator, we construct infinite partitions of $\nats$ into central sets having special translation invariant properties.

We also consider partitions defined by words generated by substitution rules. For instance, by considering partitions of $\nats$ defined by words generated by the generalized Thue-Morse substitution to an alphabet of size $r\geq 2,$ we show that

\begin{thm}\label{T3} For each pair of positive integers $r$ and $N$ there exists a partition of \[\nats=A_1\cup A_2\cup \cdots \cup A_r\] such that
\begin{itemize}
\item $A_i-n$ is a central set for each $1\leq i\leq r$ and $1\leq n\leq N.$
\item For each $n>N,$ exactly one of the sets $\{A_1-n, A_2-n, \ldots ,A_r-n\}$ is a central set.
\end{itemize}
\end{thm}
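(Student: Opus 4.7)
The plan is to construct $\omega$ as a fixed point of the generalized Thue--Morse substitution (or an appropriate iterate or modification thereof) on the alphabet $\{1,2,\ldots,r\}$, and to take $A_i = \omega|_i$, the set of occurrences of the letter $i$ in $\omega$. Concretely, letting $\mu$ be defined by $\mu(i)=i(i+1)(i+2)\cdots(i+r-1)\pmod r$, I would consider either $\mu$ itself or its iterate $\mu^k$, with $k$ chosen so that the hierarchical structure matches the parameter $N$ (for instance $r^k > N$); let $\omega$ denote the fixed point of $\mu^k$ starting with the letter $1$. The $r$ sets $A_i = \omega|_i$ then partition $\nats$.

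The key first step is to translate the centrality question into a proximality question inside the minimal shift-orbit-closure $X_\omega$. Using Definition~\ref{Cen2}, and observing that, since $\omega$ is uniformly recurrent, every point of $X_\omega$ is uniformly recurrent, one obtains that $A_i - n = (T^n\omega)|_i$ is central if and only if there exists a point $y \in X_\omega$ proximal to $T^n\omega$ with $y_0 = i$. The choice $y = T^n\omega$ (trivially proximal to itself) always witnesses that $A_{\omega_n} - n$ is central, so for every $n$ at least one $A_i - n$ is central; this covers the ``at least one central'' side of both parts of the theorem.

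Next, for $1 \leq n \leq N$, I would produce, for each letter $i$, a point $y_{i,n} \in X_\omega$ asymptotic (and hence proximal) to $T^n\omega$ with first letter $i$. Here I would exploit the existence of $r$ distinct fixed points $\omega^{(1)}, \ldots, \omega^{(r)}$ of $\mu^k$, together with the Bratteli--Vershik representation of $X_{\mu^k}$, which exhibits $r$-fold asymptotic classes at positions lying in an initial ``ambiguity zone'' of the substitution tower; the point $y_{i,n}$ is extracted from the asymptotic class of $T^n\omega$ as an appropriate shift of a suitably modified fixed point of the substitution. For $n > N$, I would argue that no such non-trivial asymptotic variant exists: the proximal class of $T^n\omega$ reduces to the singleton $\{T^n\omega\}$, so only $A_{\omega_n} - n$ is central.

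The hard part will be calibrating the construction so that the transition from the ``all central'' regime to the ``exactly one'' regime occurs precisely at $n = N$. The natural cut-off coming from the length $r^k$ of the iterated substitution block need not coincide with $N$, and so one may need a variant of generalized Thue--Morse (for instance a substitution on a larger alphabet projected down to $\{1, \ldots, r\}$, or a suitably padded constant-length substitution of length $N+1$) to align the proximal-class dichotomy with the desired cut-off. The technical core of the proof is the combinatorial analysis of the asymptotic classes in the resulting substitutive subshift and the tracking of how these classes shrink as the shift $T^n$ moves $\omega$ past the ambiguity zone.
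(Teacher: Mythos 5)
Your general framework is the right one (and matches the paper's in spirit): reduce centrality of $A_i-n$ to the existence of a uniformly recurrent point proximal to $T^n\omega$ beginning with the letter $i$, via Theorem~\ref{Berg} and Lemma~\ref{IP}, and use the generalized Thue--Morse substitution whose fixed points are distal. But the construction you propose has a genuine gap: you take $\omega$ itself to be a fixed point of $\mu$ (or of an iterate $\mu^k$), and hope that $T^n\omega$ has a nontrivial ``ambiguity zone'' of asymptotic companions for small $n$. This cannot work. The whole point of the distality lemma is that the one-sided fixed point $x$ is distal, and then so is every shift $T^n(x)$ (any point proximal to $T^n x$ lifts to a point proximal to $x$). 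Hence if $\omega$ is a fixed point, then for \emph{every} $n\geq 1$ exactly one of the sets $A_i-n$ is central, and the first bullet of the theorem fails. The left-asymptotic classes you invoke live in the two-sided subshift; they do not appear inside a one-sided fixed point, so no choice of iterate $k$ or padding of the substitution produces the transition you want at $n=N$ --- the ``hard part'' you flag is exactly where the proposal breaks down.

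The missing idea is to create the ambiguity zone by \emph{prepending} a left context rather than by modifying the substitution. Let $x^{(1)},\ldots,x^{(r)}$ be the $r$ fixed points of $\tau$ (which disagree at every position), let $x=x^{(1)}$, and let $u^{(i)}[n]$ denote the reversal of the length-$n$ prefix of $x^{(i)}$. One checks $u^{(i)}[n]x\in X$, so the $r$ points $u^{(1)}[n]x,\ldots,u^{(r)}[n]x$ share the tail $x$ (hence are pairwise proximal) and have pairwise distinct first letters at every $n$. Taking $\omega=u^{(1)}[N+1]x$ and $A_i=\omega\big|_i$, for $1\leq n\leq N$ one has $T^n\omega=u^{(1)}[N+1-n]x$, whose $r$ proximal companions realize all $r$ first letters, so every $A_i-n$ is central; for $n>N$ one has $T^n\omega=T^{n-N-1}(x)$, which is distal, so exactly one $A_i-n$ is central. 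Note also that the distality of the fixed points is not free: you assert that the proximal class collapses to a singleton, but proving it requires an argument (in the paper, the Pisot-type structure of $\tau$ and the finite-to-one property of the map onto the maximal equicontinuous factor, followed by a desubstitution argument showing any proximal companion of a fixed point is itself a fixed point). As written, your proposal neither supplies that argument nor the prepending construction, and without the latter the stated partition does not satisfy the first bullet.
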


\noindent The second assertion of Theorem~\ref{T3} relies on the fact that each fixed point of the generalized Thue-Morse substitution is distal. 

By considering partitions defined by words generating minimal subshifts  which are topologically weak mixing (for example the subshift generated by the substitution $0\mapsto 001$ and $1\mapsto 11001)$ we prove that

\begin{thm}\label{T4} For each  positive integer $r$ there exists a partition of $\nats=A_1\cup A_2\cup \cdots \cup A_r$ such that for each $1\leq i\leq r$ and $n\geq 0,$ the set $A_i-n$ is a central set.
\end{thm}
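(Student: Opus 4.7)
The plan is to exhibit, for each $r \geq 2$, a uniformly recurrent word $\omega$ over an $r$-letter alphabet $\mathcal{A}$ (in which every letter of $\mathcal{A}$ occurs) whose shift orbit closure $(X,T)$ is a minimal, topologically weakly mixing subshift, and then set $A_i := \omega\big|_i = \{m \in \nats : \omega_m = i\}$ for each $i \in \mathcal{A}$. This automatically produces a partition $\nats = \bigsqcup_{i\in\mathcal{A}} A_i$. The substitution $0\mapsto 001,\ 1\mapsto 11001$ cited in the statement furnishes such an $\omega$ when $r=2$; for larger $r$, I would invoke the existence of primitive substitutions on $r$ letters whose fixed points generate minimal weakly mixing subshifts (the case $r=1$ being trivial since $\nats$ itself is central).

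For fixed $i \in \mathcal{A}$ and $n\ge 0$, one rewrites
\[
A_i - n = \{m \in \nats : \omega_{m+n} = i\} = \{m \in \nats : (T^n\omega)_m = i\} = \{m\in\nats : T^m(T^n\omega) \in [i]\},
\]
where $[i] := \{w \in X : w_0 = i\}$ is a non-empty clopen cylinder in $X$. To verify Definition~\ref{Cen2} for $A_i - n$, the natural choice is $x := T^n(\omega) \in X$ and $U := [i]$, and we need a point $y \in [i]$ that is both uniformly recurrent in $X$ and proximal to $x$. Minimality of $(X,T)$ gives uniform recurrence of every point of $X$ for free, so the whole problem reduces to producing some $y \in [i]$ proximal to $x$.

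This last step is where topological weak mixing enters, through the classical fact that in a minimal topologically weakly mixing system $(X,T)$, the proximal cell $P(z) := \{y \in X : (z,y)\text{ is proximal}\}$ is a dense $G_\delta$ subset of $X$ for every $z \in X$. Taking $z = x$, the dense $G_\delta$ set $P(x)$ meets the non-empty open set $[i]$, delivering the required $y$ and showing that $A_i - n$ is central. The main obstacle is not the dynamical argument itself, which is short, but the preliminary ingredient: producing, for each $r$, an appropriate minimal weakly mixing $\omega$ using every letter of an $r$-letter alphabet. I would handle this by appealing to known constructions of weakly mixing primitive substitutive subshifts on alphabets of arbitrary finite size, possibly by adapting the binary example cited in the statement.
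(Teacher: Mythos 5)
Your dynamical core is sound and is essentially the argument of the paper: both proofs rest on the fact that in a minimal topologically weakly mixing system the proximal cell of any point is dense, so one can find, inside any prescribed cylinder, a (uniformly recurrent, by minimality) point proximal to $T^n(\omega)$, and proximality plus uniform recurrence yields centrality of $\{m\,|\,T^m(T^n\omega)\in U\}$. The only cosmetic difference is that you verify Definition~\ref{Cen2} directly, while the paper routes the same proximality data through Theorem~\ref{Berg} to produce a minimal idempotent $p$ with $\omega\big|_{u}-n\in p$; these are equivalent formulations.

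The genuine gap is in your choice of partition. By insisting on $A_i=\omega\big|_i$ for the letters of an $r$-letter alphabet, you make the whole theorem depend on the existence, for every $r$, of a minimal topologically weakly mixing subshift generated by a primitive substitution over exactly $r$ letters, with all letters occurring. You do not construct these, and "adapting the binary example" is not a proof; the cited Dekking--Keane examples are binary. This preliminary ingredient is both unestablished in your write-up and unnecessary. The paper avoids it entirely: it takes the single binary weakly mixing example, fixes $m$ with $\p_{\omega}(m)\geq r$, and partitions $\nats=\bigcup_{i=1}^{s}\omega\big|_{u_i}$ according to the $s=\p_\omega(m)$ factors $u_1,\dots,u_s$ of length $m$; the same weak-mixing/proximality argument shows each $\omega\big|_{u_i}-n$ is central for all $n\geq 0$, and the surplus classes are merged into the last cell $A_r=\bigcup_{i\geq r}\omega\big|_{u_i}$, which remains central (together with all its translates $A_r-n$) because any superset of a central set is central, centrality being membership in an ultrafilter. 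If you replace your letter-partition by this factor-of-length-$m$ partition (equivalently, pass to the higher-block recoding of the binary example, which is topologically conjugate and hence still minimal and weakly mixing, and then merge cylinders), your argument closes without the missing existence statement.
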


The results in this paper rely on various interactions between combinatorics on words, topological dynamics and the algebraic and topological properties of the Stone-\v Cech compactification $\beta \nats .$
We regard $\beta \nats$ as the collection of all ultrafilters on $\nats.$ An ultrafilter may be thought of as a $\{0,1\}$-valued finitely additive probability measure defined on all subsets of $\nats.$ This notion of measure induces a notion of convergence ($p$-$\lim_n)$ for sequences indexed by $\nats,$ which we regard as a mapping $p^*$ from words to words. This key notion of convergence allows us to apply ideas from combinatorics on words in the framework of ultrafilters. \\

\paragraph{\bf Acknowledgements}
The authors would like to thank V. Bergelson and Y. Son for many insightful e-mail exchanges and in particular for pointing out to us the key feature used in the proof of Theorem~\ref{T4} relating topologically weak mixing with proximality. We are also extremely grateful  to N. Hindman for his comments and suggestions on a preliminary version of this paper.  The third author is partially supported by a grant from the Academy of Finland.

\section{Words and substitutions}
In this section we give a brief summary of some of the basic background
in combinatorics on words.

\subsection{Words \& subshifts}
Given a finite non-empty set ${\mathcal A}$ (called the {\it alphabet}), we denote by ${\mathcal A}^*,$  ${\mathcal A}^\nats$  and ${\mathcal A}^\ints$ respectively the set of finite words,  the set of (right) infinite words, and the set of bi-infinite words over the alphabet ${\mathcal A}$. Given a finite word $u =a_1a_2\ldots a_n$ with $n \geq 1$ and $a_i \in {\mathcal A},$ we denote the length $n$ of $u$ by $|u|.$ The  \textit{empty word} will be denoted by $\varepsilon$ and we set $|\varepsilon|=0.$ We put $ {\mathcal A}^+= {\mathcal A}^*-\{\varepsilon\}.$ For each $a\in {\mathcal A},$ we let $|u|_a$  denote the number of occurrences of the letter $a$ in $u.$

Given an infinite word
$\omega \in {\mathcal A}^\nats,$ a word $u\in {\mathcal A}^+$ is called a  {\it factor} of $\omega$
if  $u=\omega_{i}\omega_{i+1}\cdots \omega_{i+n}$ for some natural numbers $i$ and $n.$
We denote by ${\mathcal F}_{\omega}(n)$ the set of all factors of $\omega$ of length $n,$ and set
\[{\mathcal F}_{\omega} =\bigcup _{n\in \nats} {\mathcal F}_{\omega}(n).\]
A factor $u$ of $\omega$ is called {\it right special} if both $ua$ and $ub$ are factors of $\omega$ for some pair of distinct letters $a,b \in {\mathcal A}.$ Similarly $u$ is called {\it left special} if both $au$ and $bu$ are factors of $\omega$ for some pair of distinct letters $a,b \in {\mathcal A}.$ The factor $u$ is called $bispecial$ if it is both right special and left special.
For each factor $u\in {\mathcal F}_{\omega}$  set
\[\omega\big|_{u}=\{ n\in \nats \,|\, \omega_n\omega_{n+1}\ldots \omega_{n+|u|-1}=u\}.\]
We say $\omega$ is {\it recurrent} if for every $u\in {\mathcal F}_{\omega}$  the set $\omega\big|_u$ is infinite.
We say $\omega$ is {\it uniformly recurrent} if for every $u\in {\mathcal F}_{\omega}$  the set $\omega\big|_u$ is syndedic, i.e., of bounded gap.

We endow ${\mathcal A}^\nats$ with the topology generated by the metric
\[d(x, y)=\frac 1{2^n}\,\,\mbox{where} \,\, n=\inf\{k :x_k\neq y_k\}\] 
whenever $x=(x_n)_{n\in \nats}$ and $y=(y_n)_{n\in \nats}$ are two elements of ${\mathcal A}^\nats.$ Let $T:{\mathcal A}^\nats \rightarrow {\mathcal A}^\nats$ denote the {\it shift} transformation defined by $T: (x_n)_{n\in \nats}\mapsto (x_{n+1})_{n\in \nats}.$ By a {\it subshift} on ${\mathcal A}$ we mean a pair $(X,T)$ where $X$ is a closed and $T$-invariant subset of ${\mathcal A}^\nats.$ A subshift $(X,T)$ is said to be {\it minimal}
whenever $X$ and the empty set are the only $T$-invariant closed subsets of $X.$ To each $\omega \in {\mathcal A}^\nats$ is associated the subshift $(X,T)$ where $X$ is the shift orbit closure of $\omega.$ If $\omega$ is uniformly recurrent, then the associated subshift $(X,T)$ is minimal.
Thus any two words $x$ and $y$ in $X $ have exactly the same set of factors, i.e., ${\mathcal F}_x={\mathcal F}_y.$ In this case we denote by ${\mathcal F}_{X}$ the set of factors of any word $x\in X.$

Two points $x ,y$ in $X$ are said to be {\it proximal} if and only if for each $N>0$ there exists $n\in \nats$ such that  \[x_nx_{n+1}\ldots x_{n+N}= y_ny_{n+1}\ldots y_{n+N}.\]
Two points $x,y\in X$ are said to be {\it regionally proximal} if for every prefix $u$ of $x$ and $v$ of $y,$ there exist points $x',y'\in X$ with $x'$ beginning in $u$ and $y'$ beginning in $v$ and with $x'$ proximal to $y'.$ Clearly if two points in $X$ are proximal, then they are regionally proximal.
A point $x\in X$ is called {\it distal} if the only point in $ X$ proximal to $x$ is $x$ itself.  A minimal subshift $(X,T)$
is said to be {\it topologically mixing} if for every any pair of factors $u,v \in {\mathcal F}_X$ there exists a positive integer $N$ such that for each $n\geq N,$ there exists a block of the form $uWv \in  {\mathcal F}_X$ with $|W|=n.$
A minimal subshift $(X,T)$
is said to be {\it topologically weak mixing} if for every pair of factors $u,v \in {\mathcal F}_X$
the set
\[\{n \in \nats\,|\, u{\mathcal A}^nv \cap {\mathcal F}_X \neq \emptyset\}\]
is thick, i.e., for every positive integer $N,$ the set contains $N$ consecutive positive integers.

Many of the words and subshifts  considered in this paper are generated by substitutions.
A {\it substitution} $\tau $ on an alphabet $ {\mathcal A}$ is
a mapping $\tau : {\mathcal A}\rightarrow  {\mathcal A}^+.$
The mapping
$\tau $  extends by concatenation to maps (also
denoted $\tau )$
$ {\mathcal A} ^*\rightarrow  {\mathcal A} ^*$ and $ {\mathcal A}^{\nats}\rightarrow  {\mathcal A}^{\nats}.$

Let $\tau $ be a primitive substitution on $ {\mathcal A}.$
A word $\omega \in  {\mathcal A}^{\nats}$ is called a {\it fixed point} of $\tau$
if $\tau (\omega)=\omega,$ and is called a {\it periodic point} if $\tau ^m(\omega)=\omega$ for
some $m>0.$
Although $\tau $ may fail to have a fixed point, it has at least one periodic point.
Associated to $\tau $ is the topological dynamical system
$(X,T),$ where $X$ is the shift orbit closure of a periodic point $\omega$ of $\tau.$
The primitivity of $\tau $ implies that $(X,T)$ is
independent of the choice of periodic point and is  minimal.

\subsection{Sturmian words \& generalizations}\label{sss}
Let $\omega \in {\mathcal A}^\nats$ and  set \[\p_{\omega}(n)=\mbox{Card}({\mathcal F}_{\omega}(n)).\] The function $\p_{\omega}:\nats \rightarrow \nats$ is called the {\it factor complexity function} of $\omega.$
Given a minimal subshift $(X,T)$ on $A,$ we have
${\mathcal F}_{\omega}(n)={\mathcal F}_{\omega'}(n)$ for
all $\omega, \omega '\in X$ and $n\in \nats.$ Thus we can define the factor complexity $\p_{(X,T)}(n)$ of a minimal subshift $(X,T)$ by
\[ \p_{(X,T)}(n)=\p_{\omega}(n)\]
for any $\omega \in X.$

A word $\omega\in {\mathcal A}^\nats$  is \emph{periodic} if there exists a positive integer $p$ such that
$\omega_{i+p} = \omega_i$ for all indices $i$, and it is \emph{ultimately periodic} if $\omega_{i+p} = \omega_i$ for all sufficiently large $i$.
An infinite word is \emph{aperiodic} if it is not ultimately periodic.
By a celebrated result due to Hedlund
and Morse \cite{MorHed1940},  a word is ultimately periodic if and only if its factor complexity is uniformly bounded. In particular, $p_{\omega} (n)< n$ for all $n$ sufficiently large.   Words whose factor complexity $\p_{\omega}(n)=n+1$ for all $n\geq 0$ are called
{\it Sturmian words}. Thus, Sturmian words are those aperiodic words having the lowest complexity. Since $\p_{\omega}(1)=2,$ it follows that Sturmian words are binary words. The most extensively studied Sturmian word is the
so-called Fibonacci word
\[{\bf f}=01001010010010100101001001010010
010100101001001010010\cdots\]
fixed by the morphism $0\mapsto 01$ and $1\mapsto 0.$
Let $\omega\in \{0,1\}^\nats$ be a Sturmian word, and let $\Omega$ denote the shift orbit closure of $\omega.$
The condition $\p_{\omega }(n)=n+1$ implies the existence of exactly
one right special and one left special factor of each length. Clearly, given any two left special factors, one is necessarily a prefix of the other. It follows that $\Omega$ contains a unique word all of whose prefixes are left special factors of $\omega.$
Such a word is called the {\it characteristic word} and denoted  $\tilde \omega.$ It follows that both $0\tilde\omega,1\tilde\omega \in \Omega.$  It is readily verified that the Fibonacci word above is a characteristic Sturmian word.  A Sturmian word $\omega $ is called  {\it singular} if $T^n(\omega)= \tilde \omega$ for some $n\geq 1.$ Otherwise it is said to be  {\it nonsingular.}

Sturmian words admit various types of characterizations of
geometric and combinatorial nature. We give two such
characterizations which will be used in the paper: as irrational
rotations on the unit circle and as mechanical words. In \cite
{MorHed1940} Hedlund and Morse showed that each Sturmian word may
be realized measure-theoretically by an irrational rotation on the
circle. That is, every Sturmian word is obtained by coding the
symbolic orbit of a point $x$ on the circle (of circumference one)
under a rotation $R_{\alpha}$ by an irrational angle $\alpha$,
$0<\alpha<1$, where the circle is partitioned into two
complementary intervals, one of length $\alpha $ and the other of
length $1-\alpha .$ And conversely each such coding gives rise to
a Sturmian word. The quantity $\alpha$ is called the {\it slope}.
Namely, the \emph{rotation} by angle $\alpha$ is the mapping
$R_{\alpha}$ from $[0,1)$ (identified with the unit circle) to
itself defined by $R_{\alpha}(x)=\{x+\alpha\}$, where
$\{x\}=x-[x]$ is the fractional part of $x$. Considering a
partition of $[0,1)$ into $I_0=[0, 1-\alpha)$, $I_1=[1- \alpha,
1)$, define a word
$$s_{\alpha, \rho}(n)=\begin{cases}0, & \mbox{ if }
R^n_{\alpha}(\rho)=\{\rho+n\alpha\} \in I_0, \\ 1, & \mbox{ if }
R^n_{\alpha}(\rho)=\{\rho+n\alpha\} \in I_1
\end{cases}$$
One can also define $I'_0=(0, 1-\alpha]$, $I'_1=(1- \alpha, 1]$,
the corresponding word is denoted by $s'_{\alpha, \rho}$. For a
Sturmian word $w$ of slope $\alpha$ its subshift $\Omega$ is given
by $\Omega=\{s_{\alpha, \rho}, s'_{\alpha, \rho}| \rho\in[0,1)\}$.

A straightforward computation shows that
\[s_{\alpha, \rho} (n) = \lfloor \alpha (n+1)+\rho \rfloor -
\lfloor \alpha n+\rho \rfloor,\]  \[s'_{\alpha, \rho} (n) = \lceil
\alpha (n+1)+\rho \rceil - \lceil \alpha n+\rho \rceil;\]
$s_{\alpha, \rho}$ and $s'_{\alpha, \rho}$ are called the {\it
upper} and {\it lower mechanical words} (of slope $\alpha)$ based
at $\rho$.

In \cite{ArRa}  Arnoux and  Rauzy introduced a class
of uniformly recurrent (minimal) sequences $\omega $ on a $m$-letter alphabet of complexity
$\p_{\omega }(n)=(m-1)n+1$  characterized by the following combinatorial
criterion known as the {$\star$ condition:  $\omega $
admits exactly one right special and one left special factor of each
length. We call them {\it Arnoux-Rauzy sequences}.  This condition distinguishes them from other
sequences of complexity
$(m-1)n+1$ such as those obtained by coding trajectories of $m$-interval
exchange transformations. These words are generally regarded as natural combinatorial generalizations of Sturmian words to higher alphabets. In particular, the Fibonacci word generalizes to the $m$-bonacci word fixed by the substitution \[\sigma_m :  \{0,1,\ldots , m-1\} \rightarrow \{0,1,\ldots , m-1\}^*\] given by

 \[ \sigma_m(i) =
\left\{\begin{array}{ll} 0(i+1) \,\,\,&\mbox{for}\,\, 0\leq i <m-1\\
0 \,\,\,&\mbox{for}\,\, i=m-1
\end{array}
\right.
\]

However, many of the dynamical and geometrical interpretations of Sturmian words do not extend to this new class of words (see \cite{CFZ} for example).

In the subsequent sections we will consider partitions of $\nats$ defined by words. Let
$\omega \in {\mathcal A}^\nats ,$  and let ${\mathcal F}$ denote the set of factors of $\omega.$
A finite subset $X$ is called a $\F$-{\it prefix code} if $X\subset \F$ and given any two distinct elements of $ X,$ neither one is a prefix of the other.  A $\F$-prefix code is $\F$-{\it maximal} if it is not properly contained in any other $\F$-prefix code.
The simplest example of a $\F$-maximal prefix code is the set of all elements of $\F$ of some fixed length $d.$
Each $\F$-maximal prefix code $X$ defines a partition

\[\nats = \bigcup _{u\in X} \omega \big|_u\]
If $\omega$ is a Sturmian word, then the corresponding partition is called a {\it Sturmian partition}.

\section{Ultrafilters, IP-sets and central sets}

\subsection{Stone-\v Cech compactification}
Many of our results rely on the algebraic/topological properties of the Stone-\v Cech compactification of $\nats,$ denoted $\beta \nats.$
We regard $\beta \nats$  as the set of all ultrafilters on $\nats$ with the {\it Stone topology.}

\vspace{.1in}

Recall that a set $\mathcal U$ of subsets of $\nats$ is called an {\it ultrafilter} if the following conditions hold:
\begin{itemize}
\item $\emptyset \notin \mathcal U.$
\item If $A\in \mathcal U$ and $A\subseteq B,$ then $B\in \mathcal U.$
\item $A\cap B \in \mathcal U$ whenever both $A$ and $B$ belong to $\mathcal U.$
\item For every $A\subseteq \nats$ either $A\in \mathcal U$ or $A^c\in \mathcal U$ where $A^c$ denotes the complement of $A.$
\end{itemize}

For every natural number $n\in \nats,$ the set $\mathcal{U}_n=\{A\subseteq \nats \,|\, n\in A\}$ is an example of an ultrafilter. This defines an injection $i:\nats \hookrightarrow \beta \nats$ by: $n\mapsto \mathcal{U}_n.$
An ultrafilter of this form is said to be {\it principal.} By way of Zorn's lemma, one can show the existence of  non-principal (or {\it free}) ultrafilters.

It is customary to denote elements of $\beta \nats$ by letters $p,q,r \ldots.$
For each set $A \subseteq \nats,$ we set $A^\circ =\{p\in \beta \nats | A\in p\}.$ Then the set $\mathcal{B}=\{A^\circ | A\subseteq \nats\}$ forms a basis for the open sets (as well as a basis for the closed sets) of $\beta \nats$  and defines a topology on $\beta \nats$ with respect to which $\beta \nats$ is  both compact and Hausdorff.\footnote{Although the existence of free ultrafilters requires Zorn's lemma, the cardinality of $\beta \nats$ is $2^{2^\nats}$ from which it follows that $\beta \nats$ is not metrizable.} 

There is a natural extension of the operation of addition $+$ on $\nats$ to $\beta \nats$ making $\beta \nats$ a compact {\it left-topological semigroup.} More precisely we define addition of two ultrafilters $p,q$ by the following rule:

\[p+q=\{A \subseteq \nats \,|\, \{n\in \nats | A-n \in p\}\in q\}.\]

It is readily verified that $p+q$ is once again an ultrafilter and that
for each fixed $p\in \beta \nats,$ the mapping $q\mapsto p+q$ defines a continuous map from $\beta \nats $ into itself.\footnote{Our definition of addition of ultrafilters is the same as that given in \cite{VB2} but is the reverse of that given in \cite{HS} in which $A\in p+q$ if and only if $\{n\in \nats | A-n \in q\}\in p\}.$ In this case, $\beta \nats$ becomes a compact right-topological semigroup.} The operation of addition in $\beta \nats$ is associative and for principal ultrafilters we have $\mathcal{U}_m + \mathcal{U}_n= \mathcal{U}_{m+n}.$ However in general addition of ultrafilters is highly non-commutative. In fact it can be shown that the center is precisely the set of all principal ultrafilters \cite{HS}.

\subsection{IP-sets and central sets}

Let $(\mathcal{S}, +)$ be a semigroup. An element $p\in \mathcal{S}$  is called an {\it idempotent} if $p+p=p.$
We recall the following result of Ellis \cite{E}:

\begin{theorem}[Ellis \cite{E}]\label{Ellis} Let $(\mathcal{S}, +)$ be a compact left-topological semigroup (i.e., $\forall x\in \mathcal{S}$ the mapping $y\mapsto x+y$ is continuous). Then $\mathcal{S}$ contains an idempotent.
\end{theorem}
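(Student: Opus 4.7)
The plan is to apply Zorn's lemma to extract a minimal non-empty closed subsemigroup $T \subseteq \mathcal{S}$, and then to locate an idempotent inside $T$ by exploiting the continuity of left translation together with the minimality of $T$. This is the classical Ellis argument.

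First I would consider the collection $\mathcal{M}$ of all non-empty closed subsemigroups of $\mathcal{S}$, partially ordered by reverse inclusion. It is non-empty since $\mathcal{S} \in \mathcal{M}$. Given any chain $\{T_i\}_{i \in I}$ in $\mathcal{M}$, the intersection $\bigcap_{i \in I} T_i$ is again a closed subsemigroup; by compactness and the finite intersection property it is non-empty, so it lies in $\mathcal{M}$ and serves as an upper bound for the chain in the reverse-inclusion order. Zorn's lemma then yields a minimal (with respect to inclusion) non-empty closed subsemigroup $T \subseteq \mathcal{S}$.

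Next, I would fix an arbitrary $x \in T$ and consider $x + T := \{x + y : y \in T\}$. Since left translation by $x$ is continuous and $T$ is compact, $x + T$ is compact and hence closed. It is moreover a subsemigroup of $\mathcal{S}$ contained in $T$ (because $T$ is a subsemigroup and $x \in T$), so minimality of $T$ forces $x + T = T$. In particular there exists $y \in T$ with $x + y = x$, so the set
\[
E := \{y \in T : x + y = x\}
\]
is non-empty. Using continuity of $y \mapsto x + y$ once more, $E$ is closed in $T$; and associativity shows it is a subsemigroup, since for $y_1, y_2 \in E$ we have $x + (y_1 + y_2) = (x + y_1) + y_2 = x + y_2 = x$. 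Thus $E$ is a non-empty closed subsemigroup of $T$, and by minimality of $T$ we obtain $E = T$. Since $x \in T = E$, this gives $x + x = x$, the desired idempotent.

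The main subtle point is the correct use of the left-topological hypothesis: continuity of $y \mapsto x + y$ (rather than of $y \mapsto y + x$) is invoked twice, first to show that $x + T$ is a closed subsemigroup lying inside $T$, and second to show that the stabilizer $E$ is closed. Under the opposite (right-topological) convention the same strategy still works but with $T + x$ and $\{y : y + x = x\}$ in place of our sets; the distinction matters in the present non-commutative setting and is the only step where care is truly required.
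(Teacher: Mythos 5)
Your proof is correct: it is the standard Ellis--Numakura argument (Zorn's lemma applied to non-empty closed subsemigroups ordered by reverse inclusion to obtain a minimal one $T$, then $x+T=T$ and the stabilizer $\{y\in T \,:\, x+y=x\}=T$ by minimality, forcing $x+x=x$), and you have adapted it correctly to the paper's left-topological convention, invoking continuity of $y\mapsto x+y$ precisely where it is needed. The paper itself offers no proof of this statement --- it is quoted as Ellis's theorem with a citation --- so there is nothing to compare beyond noting that yours is the classical argument that Ellis's paper contains. One minor point worth making explicit: closedness of $x+T$ (a compact image) and of the stabilizer (the preimage of the singleton $\{x\}$ under $y\mapsto x+y$, intersected with $T$) uses that $\mathcal{S}$ is Hausdorff, an assumption the statement leaves implicit but which holds in the intended application to $\beta\nats$; likewise the subsemigroup property of $x+T$, which you assert, follows from the one-line computation $(x+y_1)+(x+y_2)=x+(y_1+x+y_2)$ with $y_1+x+y_2\in T$.
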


\noindent It follows that $\beta \nats$ contains a non-principal ultrafilter $p$ satisfying $p+p=p.$ In fact, we could simply apply Ellis's result to the semigroup $\beta \nats - \mathcal{U}_0.$ This would then exclude the only principal idempotent ultrafilter, namely $\mathcal{U}_0.$ From here on, by an idempotent ultrafilter in $\beta \nats$ we mean a free idempotent ultrafilter.

We will make use of the following striking result due to Hindman linking IP-sets and idempotents in $\beta \nats:$

\begin{theorem}[Theorem 5.12 in \cite{HS}]\label{Hind} A subset $A\subseteq \nats$ is an IP-set if and only if $A\in p$ for some idempotent $p\in \beta \nats.$
\end{theorem}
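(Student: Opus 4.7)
The plan is to prove both implications using Ellis's theorem (Theorem~\ref{Ellis}) together with the shift operation $B \mapsto B - n$ on subsets of $\nats$ and the definition of ultrafilter addition recalled above. Throughout I use the convention adopted in the paper that ``idempotent'' means ``free idempotent'', so that every $A \in p$ is automatically infinite.

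For the direction $(\Leftarrow)$, suppose $p + p = p$ and $A \in p$. For any $B \in p$ define $B^{\star} = \{n \in B : B - n \in p\}$. Unwinding $A \in p = p + p$ gives $\{n : A - n \in p\} \in p$, and intersecting with $A \in p$ yields $A^{\star} \in p$; in particular $A^{\star}$ is infinite. The small observation needed is that whenever $B \in p$ and $n \in B^{\star}$, the set $(B \cap (B-n))^{\star}$ again lies in $p$, since $B \cap (B-n) \in p$ and the preceding construction applies to any member of $p$. Iterating, I set $A_0 = A$, choose $x_k \in A_k^{\star}$ strictly greater than all previously chosen terms (possible since $A_k^{\star}$ is infinite), and put $A_{k+1} = A_k \cap (A_k - x_k)$. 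A routine induction shows $A_k \subseteq A - \sum_{i \in F} x_i$ for every $F \subseteq \{0,\dots,k-1\}$, and therefore $\sum_{n \in F} x_n \in A$ for every $F \in \Fin$.

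For the direction $(\Rightarrow)$, assume $A \supseteq \bigl\{\sum_{n \in F} x_n : F \in \Fin\bigr\}$ for some strictly increasing sequence $x_0 < x_1 < \cdots$ in $\nats$. For each $m \geq 0$ set $B_m = \bigl\{\sum_{n \in F} x_n : F \in \Fin,\ \min F \geq m\bigr\}$ and $T_m = \{q \in \beta\nats : B_m \in q\}$; by the definition of the Stone topology each $T_m$ equals the closure of $B_m$ in $\beta\nats$ and is therefore a non-empty closed set. Let $T = \bigcap_{m \geq 0} T_m$; as a decreasing intersection of non-empty closed subsets of the compact space $\beta\nats$ it is non-empty and closed. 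The decisive step is to verify that $T$ is a sub-semigroup of $(\beta\nats, +)$. Fix $p, q \in T$ and $m \geq 0$; I would show $B_m \in p + q$ by exhibiting the inclusion $B_m \subseteq \{n : B_m - n \in p\}$. Indeed, for any $n = \sum_{i \in F} x_i$ with $\min F \geq m$, every $y \in B_{\max F + 1}$ has the form $\sum_{j \in G} x_j$ with $G \cap F = \emptyset$, whence $n + y = \sum_{i \in F \cup G} x_i \in B_m$; thus $B_{\max F + 1} \subseteq B_m - n$, and $B_{\max F + 1} \in p$ because $p \in T$. Since $B_m \in q$, this forces $\{n : B_m - n \in p\} \in q$, i.e.\ $B_m \in p + q$.

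Once $T$ is established as a non-empty closed sub-semigroup of the compact left-topological semigroup $\beta\nats$, it inherits the same structure, and Ellis's theorem delivers an idempotent $p \in T$. Since $p \in T_0$ we have $B_0 \in p$, and the inclusion $A \supseteq B_0$ then forces $A \in p$. The main obstacle I foresee is the semigroup-closure step: one must carefully distinguish index sets from their numerical sums (a given integer might in principle be representable as $\sum_{i \in F} x_i$ in more than one way), and the key combinatorial inclusion $B_{\max F + 1} \subseteq B_m - n$ works precisely because we can append an index-disjoint tail beyond $\max F$. Everything else reduces to routine unwinding of the definitions of ultrafilter addition and the Stone topology.
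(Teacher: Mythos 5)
Your statement is one the paper does not prove at all: it is quoted verbatim as Theorem 5.12 of Hindman--Strauss, so there is no internal argument to compare against. Your proposal is, in effect, the classical Galvin--Glazer/Hindman proof that the citation points to, and it is correct. You have also adapted it properly to the paper's reversed convention for ultrafilter addition ($A\in p+q$ iff $\{n : A-n\in p\}\in q$): in the easy direction you correctly unwind $A\in p+p$ to get $A^{\star}=\{n\in A : A-n\in p\}\in p$ and run the usual $A_{k+1}=A_k\cap(A_k-x_k)$ recursion (the final ``therefore'' is the one-line observation that $x_{\max F}\in A_{\max F}$, which is worth writing), and in the hard direction you correctly place the tail sets $B_{\max F+1}$ in the \emph{left} summand $p$ and $B_m$ in the \emph{right} summand $q$, which is exactly what this convention requires before invoking Ellis's theorem on the closed subsemigroup $T=\bigcap_m \{q : B_m\in q\}$. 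The only point you should add a sentence for: since you (following the paper) take ``idempotent'' to mean \emph{free} idempotent, the forward direction must produce a free one. This is automatic in your construction -- every member of the idempotent $p\in T$ contains some $B_m$, whose elements are all at least $x_m\geq m$, so $p$ cannot be the principal ultrafilter $\mathcal{U}_k$ for any $k$ -- but as written you do not say it. Apart from that omission and the small ``therefore'' step, the argument is complete and is the same route as the standard proof in the cited reference.
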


\noindent It follows immediately that $A$ is an IP$^*$-set if and only if $A\in p$ for every idempotent $p\in \beta \nats$ (see Theorem~2.15 in \cite{VB2}). We also note that the property of being an IP-set is partition regular.\\

In \cite{F}, Furstenberg introduced a special class of IP-sets, called central sets, having additional rich combinatorial properties. They were originally defined in terms of topological dynamics (see Definition~\ref{Cen2}).
As in the case of IP-sets, they may be alternatively defined in terms of belonging to a special class of free ultrafilters, called minimal idempotents\footnote{The equivalence between the two definitions is due to Bergelson and Hindman \cite{BH}.}. To define a minimal idempotent we must first review some basic properties concerning ideals in $\beta \nats$.

Let $(\mathcal{S},+)$ be any semigroup.
Recall that a subset $\mathcal{I}\subseteq \mathcal{S}$ is called a {\it right (resp. left) ideal} if
$\mathcal{I}+\mathcal{S}\subseteq \mathcal{I}$
 (resp. $\mathcal{S}+\mathcal{I}\subseteq \mathcal{I}$). It is called a {\it two sided ideal} if it is both a left and right ideal.
A right (resp. left) ideal $\mathcal{I}$ is called {\it minimal} if every right (resp. left) ideal $\mathcal{J}$ included in $\mathcal{I}$ coincides with $\mathcal{I}.$

Minimal right/left ideals do not necessarily exist  e.g. the commutative semigroup  $(\nats, +)$ has no minimal right/left ideals (the ideals in $\nats$ are all of the form $\mathcal{I}_n=[n, +\infty)=\{m\in \nats\,|\,m\geq n\}.)$ However,
every compact Hausdorff left-topological semigroup $\mathcal{S}$ (e.g., $\beta \nats)$  admits a smallest two sided ideal $K(\mathcal{S})$ which is at the same time the union of all minimal right ideals of $\mathcal{S}$ and the union of all minimal left ideals of $\mathcal{S}$ (see for instance \cite{HS}).
It is readily verified that the intersection of any minimal left ideal with any minimal right ideal is a group. In
particular, there are idempotents in $K(\mathcal{S}).$ Such idempotents are called minimal and their elements are called central sets:

\begin{definition} An idempotent $p$  is called a {\it minimal} idempotent of $\mathcal{S}$ if it belongs to $K(\mathcal{S}).$
\end{definition}

\begin{definition}\label{Cen1} A subset $A \subset \nats$ is called {\it central} if it is a member of some minimal idempotent in $\beta \nats.$ It is called a central$^*$-set if it belongs to every minimal idempotent in $\beta \nats.$
\end{definition}

The equivalence between definitions \ref{Cen2} and \ref{Cen1} is
due to Bergelson and Hindman in \cite{BH}. It follows from the
above definition that every central set is an IP-set and that the
property of being central is partition regular. Central sets are
known to have substantial combinatorial structure. For example,
any central set contains arbitrarily long arithmetic progressions,
and solutions to all partition regular systems of homogeneous
linear equations (see for example \cite{BHS}). Many of the rich
properties of central sets are a consequence of the {\it Central
Sets Theorem} first proved by Furstenberg in Proposition~8.21 in
\cite{F} (see also \cite{DHS, BHS, HS2}). Furstenberg pointed out
that as an immediate consequence of the Central Sets Theorem one
has that whenever $\nats$  is divided into finitely many classes,
and a sequence $(x_n)_{n\in \nats}$ is given, one of the classes
must contain arbitrarily long arithmetic progressions whose
increment belongs to $\{\sum _{n\in F}x_n | F\in \Fin\}.$

\subsection{Limits of ultrafilters}
It is often convenient to think of an ultrafilter $p$ as a $\{0,1\}$-valued, finitely additive probability measure on the power set of $\nats.$ More precisely, for any subset $A\subseteq \nats,$ we say $A$ has $p$-measure $1,$ or is $p$-large if $A\in p.$ This notion of measure gives rise to a notion of convergence of sequences indexed by $\nats$
which is the key tool in allowing us to apply ideas from combinatorics on words to the framework of ultrafilters.  However, from our point of view, it is  more natural to define it alternatively as a mapping from words to words (see Remark~\ref{p-lim}). Let $\mathcal{A}$ denote a non-empty finite set. Then
each ultrafilter $p\in \beta \nats$ naturally defines a mapping \[p^*:\mathcal{A}^\nats \rightarrow \mathcal{A}^\nats\]
as follows:

\begin{definition}\label{p*} For each $p\in \beta \nats$ and $\omega\in \mathcal{A}^\nats,$ we define $p^*(\omega) \in\mathcal{A}^\nats$ by the condition:   $u\in \mathcal{A}^*$ is a prefix of $p^*(\omega) $ $\Longleftrightarrow$ $\omega \big|_u\in p.$
\end{definition}

\noindent We note that if  $u,v\in \mathcal{A}^*, $ $\omega \big|_u, \omega\big|_v \in p$  and  $|v|\geq |u|,$  then $u$ is a prefix of $v.$ In fact, if $v'$ denotes the
prefix of $v$ of length $|u|$ then as $\omega \big|_v \subseteq \omega \big|_{v'}, $ it follows that $\omega \big|_{v'}\in p$ and hence $u=v'.$
Thus $p^*(\omega)$ is well defined.

We note that if $\omega, \nu \in \mathcal{A}^\nats$ and if each prefix $u$ of $\nu$ is a factor of $\omega, $ then there exists an ultrafilter $p\in \beta \nats$ such that $p^*(\omega)=\nu.$ In fact, the set
\[\mathcal{C}=\{\omega \big|_u \,|\, u \,\,\mbox{is a prefix of}\,\,\nu\}\]
satisfies the finite intersection property, and hence by a routine argument involving Zorn's lemma it follows that there exists  a $p\in \beta \nats$ with $\mathcal{C}\subseteq p.$

It follows immediately from the definition of $p^*,$ Definition~\ref{Cen1}  and Theorem~\ref{Hind} that

\begin{lemma}\label{IP} The set $\omega\big|_u$ is an IP-set (resp. central set) if and only if $u$ is a prefix of $p^*(\omega)$ for some idempotent (resp. minimal idempotent) $p\in \beta \nats.$
\end{lemma}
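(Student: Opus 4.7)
The plan is to argue that this lemma is essentially an unpacking of three ingredients that have already been set up in the paper: Definition~\ref{p*} of the map $p^*$, Theorem~\ref{Hind} of Hindman characterizing IP-sets via idempotent ultrafilters, and Definition~\ref{Cen1} characterizing central sets via minimal idempotent ultrafilters. So the proof will not require any new construction — it is a chain of ``if and only if'' equivalences.

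Concretely, I would proceed as follows. First, I would record the immediate reformulation coming from Definition~\ref{p*}: for any $p \in \beta\nats$ and any factor $u$ of $\omega$,
\[
u \text{ is a prefix of } p^*(\omega) \iff \omega\big|_u \in p.
\]
This is literally the defining condition of $p^*$, once one has checked (as the paragraph following Definition~\ref{p*} does) that $p^*(\omega)$ is well-defined. Next, I would invoke Theorem~\ref{Hind} to replace the statement ``$\omega\big|_u$ is an IP-set'' by the equivalent statement ``$\omega\big|_u \in p$ for some idempotent $p \in \beta\nats$''. Combining these two equivalences gives exactly the IP-set direction of the lemma.

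For the central-set direction, I would use Definition~\ref{Cen1} in place of Theorem~\ref{Hind}: by definition, $\omega\big|_u$ is central if and only if $\omega\big|_u \in p$ for some minimal idempotent $p \in \beta\nats$, and then the equivalence from Definition~\ref{p*} finishes the argument.

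There is really no obstacle here; the whole content of the lemma is that one recognizes that the two translations ``$A \in p$'' and ``the corresponding word $u$ is a prefix of $p^*(\omega)$'' are synonymous when $A = \omega\big|_u$. The only minor point worth writing explicitly in the proof is the justification that $p^*(\omega)$ is well-defined, so that ``$u$ is a prefix of $p^*(\omega)$'' genuinely means ``$\omega\big|_u \in p$'' — but this has already been verified in the paragraph immediately after Definition~\ref{p*}, so I would just refer back to it rather than repeat the argument.
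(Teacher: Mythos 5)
Your proposal is correct and matches the paper exactly: the paper gives no separate proof of this lemma, stating only that it follows immediately from Definition~\ref{p*}, Definition~\ref{Cen1} and Theorem~\ref{Hind}, which is precisely the chain of equivalences you spell out. Your extra remark about the well-definedness of $p^*(\omega)$ is the same point the paper handles in the paragraph after Definition~\ref{p*}, so nothing is missing.
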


\begin{lemma} For each $p\in \beta \nats,$ $\omega \in \mathcal{A}^\nats$ and $u\in \mathcal{A}^*$ we have
\[ p^*(\omega)\big|_u=\{m\in \nats\,|\, \omega\big|_u-m\in p\}\]
where $\omega\big|_u-m$ is defined as the set of all $n\in \nats$ such that $n+m\in \omega\big|_u.$

\end{lemma}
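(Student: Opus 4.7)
The plan is to verify the two set inclusions separately, in each case reducing the question to the defining property of $p^*$ from Definition~\ref{p*} and exploiting the filter axioms (upward closure and closure under finite intersections). The main conceptual input is the observation already exploited earlier in the section: for any prefix $v$ of $p^*(\omega),$ the set $\omega\big|_v$ belongs to $p.$ The only thing that requires care is the bookkeeping of indices, since $\omega\big|_u-m$ is a set of \emph{occurrence positions} while membership of a prefix in $p^*(\omega)$ is phrased in terms of those occurrence sets. I do not anticipate a real obstacle; the argument is essentially unpacking definitions once the right prefix is identified.

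For the inclusion $p^*(\omega)\big|_u \subseteq \{m\in\nats \,|\, \omega\big|_u - m\in p\},$ fix $m\in p^*(\omega)\big|_u$ and let $v'$ denote the prefix of $p^*(\omega)$ of length $m.$ By assumption, $u$ occurs at position $m$ in $p^*(\omega),$ so $v:=v'u$ is a prefix of $p^*(\omega)$ of length $m+|u|.$ By Definition~\ref{p*}, $\omega\big|_v\in p.$ If $n\in\omega\big|_v,$ then $\omega_n\cdots\omega_{n+m+|u|-1}=v'u,$ and in particular $\omega_{n+m}\cdots\omega_{n+m+|u|-1}=u,$ so $n+m\in\omega\big|_u,$ i.e.\ $n\in\omega\big|_u - m.$ Thus $\omega\big|_v\subseteq\omega\big|_u-m,$ and upward closure of the ultrafilter $p$ gives $\omega\big|_u-m\in p.$

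For the reverse inclusion, suppose $\omega\big|_u-m\in p,$ and again let $v'$ be the prefix of $p^*(\omega)$ of length $m,$ so $\omega\big|_{v'}\in p.$ The key elementary observation is that
\[
\omega\big|_{v'}\,\cap\,\bigl(\omega\big|_u - m\bigr)\;\subseteq\;\omega\big|_{v'u},
\]
because if $n$ lies in the left-hand side then $\omega_n\cdots\omega_{n+m-1}=v'$ and $\omega_{n+m}\cdots\omega_{n+m+|u|-1}=u,$ so $\omega_n\cdots\omega_{n+m+|u|-1}=v'u.$ Since $p$ is closed under finite intersections and upward closed, it follows that $\omega\big|_{v'u}\in p.$ Invoking Definition~\ref{p*} once more, $v'u$ is a prefix of $p^*(\omega);$ since $|v'|=m,$ the factor $u$ occurs in $p^*(\omega)$ at position $m,$ so $m\in p^*(\omega)\big|_u.$ This completes both inclusions and hence the proof.
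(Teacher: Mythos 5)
Your proof is correct. The first inclusion is essentially identical to the paper's: you take the prefix $v=v'u$ of $p^*(\omega)$ of length $m+|u|$, note $\omega\big|_v\subseteq \omega\big|_u-m$, and conclude by upward closure. The converse inclusion, however, follows a genuinely different (and slightly cleaner) route. The paper covers $\omega\big|_u-m$ by the finite union $\bigcup_{v\in Z}\omega\big|_v$, where $Z$ is the set of factors of length $m+|u|$ ending in $u$, and then invokes the ultrafilter (prime) property to extract a single $v\in Z$ with $\omega\big|_v\in p$; that $v$ is then a prefix of $p^*(\omega)$ ending in $u$. You instead intersect $\omega\big|_u-m$ with $\omega\big|_{v'}$, where $v'$ is the length-$m$ prefix of $p^*(\omega)$, observe the containment $\omega\big|_{v'}\cap(\omega\big|_u-m)\subseteq \omega\big|_{v'u}$, and use only the filter axioms (finite intersection, upward closure) together with both directions of Definition~\ref{p*} to conclude $v'u$ is a prefix of $p^*(\omega)$. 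What your version buys is that the finite-cover/pigeonhole step disappears from the proof proper; the ultrafilter dichotomy is used only through the already-established well-definedness of $p^*(\omega)$, i.e.\ the existence of its length-$m$ prefix with $\omega\big|_{v'}\in p$, which the paper has justified right after Definition~\ref{p*}. What the paper's version buys is that it identifies the prefix of length $m+|u|$ directly from the cover without referring back to the shorter prefix. Both arguments are short, complete, and handle the degenerate case $m=0$ (where $v'$ is the empty word) without difficulty.
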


\begin{proof} Suppose $m\in p^*(\omega) \big|_u.$ Then by definition $u$ occurs in position $m$ in $p^*(\omega).$
Let $v$ denote the prefix of $p^*(\omega)$ of length $|v|=m+|u|.$ Then, as $u$ is a suffix of $v$ we have  $\omega \big|_v +m \subseteq \omega \big|_u$  and hence $\omega \big|_v \subseteq \omega \big|_u -m.$ But as
$v$ is a prefix of $p^*(\omega)$ we have $\omega \big|_v \in p$ and hence $\omega \big|_u -m \in p$ as required.

Conversely, fix $m\in \nats$ such that $\omega \big|_u-m\in p.$ Let $Z$ be the set of all factors $v$ of $\omega$ of length $|v|=m+|u|$ ending in $u.$  Then \[\omega \big|_u-m \subseteq \bigcup_{v\in Z} \omega\big|_v.\]
It follows that there exists $v\in Z$ such that $\omega \big|_v\in p.$ In other words, there exists $v\in Z$ such that
$v$ is a prefix of $p^*(\omega).$ It follows that $u$ occurs in position $m$ in $p^*(\omega).$
\end{proof}

\begin{lemma}\label{idempotent} For $p,q \in \beta \nats$ and $\omega\in \mathcal{A}^\nats,$ we have $(p+q)^*(\omega)=q^*(p^*(\omega)).$ In particular, if $p$ is an idempotent, then $p^*(p^*(\omega))=p^*(\omega).$

\end{lemma}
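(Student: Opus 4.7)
The plan is to verify the identity $(p+q)^*(\omega) = q^*(p^*(\omega))$ prefix by prefix, reducing everything to the definition of addition in $\beta\nats$ and the previous lemma. Since both sides are infinite words in $\mathcal{A}^\nats$, it suffices to check that a finite word $u \in \mathcal{A}^*$ is a prefix of the left-hand side if and only if it is a prefix of the right-hand side.

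First I would unpack the left-hand side: by Definition~\ref{p*}, $u$ is a prefix of $(p+q)^*(\omega)$ precisely when $\omega|_u \in p+q$, which by the definition of ultrafilter addition means $\{n \in \nats \,|\, \omega|_u - n \in p\} \in q$. Next I would unpack the right-hand side: $u$ is a prefix of $q^*(p^*(\omega))$ iff $p^*(\omega)|_u \in q$. The key input now is the preceding lemma, which gives the explicit description
\[ p^*(\omega)\big|_u = \{m \in \nats \,|\, \omega\big|_u - m \in p\}. \]
Substituting this into the right-hand side produces exactly the same membership condition in $q$ as on the left, so the two sides coincide.

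Since the equality of the two words is established by matching their sets of prefixes, the first assertion follows. For the second assertion, one simply sets $q = p$ and uses the hypothesis $p+p = p$ to conclude $p^*(p^*(\omega)) = (p+p)^*(\omega) = p^*(\omega)$.

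The proof is essentially bookkeeping between the three definitions (the definition of $p^*$, the definition of addition in $\beta\nats$, and the displayed formula from the previous lemma), so I do not expect a serious obstacle. The only delicate point is making sure that the direction of ultrafilter addition used in this paper (as noted in the footnote, addition is defined so that $A \in p+q$ iff $\{n \,|\, A-n \in p\} \in q$) matches up with the order of composition $q^* \circ p^*$ rather than $p^* \circ q^*$; this is what forces the functional equation to come out in the stated order and is the only place where a sign-type error could occur.
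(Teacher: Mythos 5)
Your proposal is correct and follows exactly the paper's argument: unpack both sides prefix by prefix via Definition~\ref{p*} and the definition of ultrafilter addition, then identify the two membership conditions in $q$ using the preceding lemma's formula $p^*(\omega)\big|_u=\{m\in\nats\,|\,\omega\big|_u-m\in p\}$. The idempotent case by setting $q=p$ is likewise how the paper concludes, so there is nothing to add.
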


\begin{proof} For each word $u\in \mathcal{A}^*$ we have that $u$ is a prefix of $(p+q)^*(\omega)$ if and only if
\[\omega \big|_u \in p+q \Longleftrightarrow \{ m \in \nats \,|\, \omega \big|_u -m \in p\} \in q.\]
On the other hand, $u$ is a prefix of $q^*(p^*(\omega))$ if and only if $p^*(\omega)\big|_u \in q.$
The result now follows immediately from the preceding lemma.
\end{proof}

\begin{lemma}\label{commutes}For each $p\in \beta \nats$ and $\omega \in \mathcal{A}^\nats$ we have $p^*(T(\omega))=T(p^*(\omega))$ where $T:\mathcal{A}^\nats \rightarrow \mathcal{A}^\nats$ denotes the shift map.
\end{lemma}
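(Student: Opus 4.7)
The plan is to reduce the equality of the two infinite words $p^*(T(\omega))$ and $T(p^*(\omega))$ to a statement about prefixes, and then to exploit the ultrafilter axioms on a suitable finite partition of $\nats$ indexed by the alphabet. Since an element of $\mathcal{A}^\nats$ is determined by its set of prefixes, it suffices to show that for every $u \in \mathcal{A}^*$, $u$ is a prefix of $p^*(T(\omega))$ if and only if $u$ is a prefix of $T(p^*(\omega))$.

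First, I would unpack each side using Definition~\ref{p*}. By definition, $u$ is a prefix of $p^*(T(\omega))$ iff $T(\omega)\big|_u \in p$, and a direct computation gives
\[T(\omega)\big|_u \;=\; \{n \in \nats \,|\, \omega_{n+1}\omega_{n+2}\cdots \omega_{n+|u|} = u\}.\]
On the other side, $u$ is a prefix of $T(p^*(\omega))$ iff for some letter $a \in \mathcal{A}$ the concatenation $au$ is a prefix of $p^*(\omega)$, which by Definition~\ref{p*} is equivalent to $\omega\big|_{au} \in p$ for some $a \in \mathcal{A}$.

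Second, I would observe the key combinatorial identity: partitioning the occurrence $T(\omega)\big|_u$ according to the letter at position $n$ yields the disjoint decomposition
\[T(\omega)\big|_u \;=\; \bigsqcup_{a \in \mathcal{A}} \omega\big|_{au},\]
and this union is finite because $\mathcal{A}$ is finite. Now apply the ultrafilter axioms: since $p$ is closed under supersets, if some $\omega\big|_{au}$ lies in $p$ then so does $T(\omega)\big|_u$; conversely, if $T(\omega)\big|_u \in p$, then since it is partitioned into finitely many disjoint pieces, exactly one of the $\omega\big|_{au}$ must belong to $p$ (for otherwise, each $\omega\big|_{au}$ would lie in $p$'s complement, hence their finite union would too, contradicting $T(\omega)\big|_u \in p$). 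Combining the two equivalences proves $u$ is a prefix of $p^*(T(\omega))$ iff $u$ is a prefix of $T(p^*(\omega))$, as required.

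I expect no serious obstacle here: the entire argument rests on the disjoint decomposition above together with the elementary finite-additivity of ultrafilters. The only point to watch is that the decomposition is indeed disjoint and exhausts $T(\omega)\big|_u$, which is immediate from reading off the letter $\omega_n = a$ at the starting position.
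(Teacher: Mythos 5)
Your proof is correct and follows essentially the same route as the paper's: both rest on the decomposition $T(\omega)\big|_u=\bigcup_{a\in\mathcal{A}}\omega\big|_{au}$ together with the ultrafilter axioms. The only difference is that you verify both directions of the prefix equivalence explicitly, whereas the paper checks just one (which already suffices, since one infinite word having all its prefixes occur as prefixes of another forces equality).
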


\begin{proof} Assume $u\in \mathcal{A}^*$ is a prefix of $p^*(T(\omega)).$ Then $T(\omega)\big|_u \in p.$
But
\[T(\omega)\big|_u =\bigcup_{a\in \mathcal{A}}\omega\big|_{au}.\]
It follows that there exists $a\in \mathcal{A}$ such that $\omega\big|_{au}\in p.$ Thus $au$ is a prefix of $p^*(\omega)$ and hence $u$ is a prefix of $T(p^*(\omega)).$
\end{proof}

\begin{remark}\label{p-lim}
{\rm It is readily verified that our definition of $p^*$ coincides with that of $p$-$\lim_n.$
More precisely, given a sequence $(x_n)_{n\in \nats}$ in a topological space and an ultrafilter $p\in \beta \nats,$  we write
$p$-$\lim_nx_n=y$ if for every neighborhood $U_y$ of $y$ one has $\{n\,| x_n \in U_y\}\in p.$
In our case we have
$p^*(\omega)=p$-$\lim_n(T^n(\omega))$  (see \cite{H2}). With this in mind, the preceding two lemmas are well known (see for instance \cite{Bl,H2}).
However, our defining condition of $p^*$ in Definition~\ref{p*} does not directly rely on the topology and so may be applied in other general settings. For instance, let $\Omega \subseteq \mathcal{A}^\nats$ be a subshift, and  $\mathcal{N}=\{n_0<n_1<n_2<\cdots \}$ an infinite sequence of natural numbers. For each  $\omega \in \Omega$ we put
\[X_k^{\mathcal{N}}=\{\omega_{n+n_0}\omega_{n+n_1}\ldots \omega_{n+n_{k-1}}\,|\, n\geq 0\}\subseteq \mathcal{A}^k.\]
For each $u\in X_k^{\mathcal{N}}$ we define the set \[\omega ^{\mathcal{N}}\big|_u=\{n\in \nats\,|\, \omega_{n+n_0}\omega_{n+n_1}\ldots \omega_{n+n_{k-1}}=u\}.\] Then the sets $\omega ^{\mathcal{N}}\big|_u$ with $u\in X_k^{\mathcal{N}}$ partition $\nats.$ So, given $p\in \beta \nats,$ for each $k\geq 1$ there exists a unique $u\in X_k^{\mathcal{N}}$ with $\omega ^{\mathcal{N}}\big|_u\in p.$ Moreover if $v\in X_{k+1}^{\mathcal{N}}$ and $\omega ^{\mathcal{N}}\big|_v \in p,$ then $u$ is a prefix of $v.$ So using the condition in Definition~\ref{p*}, each
infinite sequence $\mathcal{N}$ and ultrafilter $p\in \beta \nats$ defines a mapping $\Omega \rightarrow \Omega.$ Of particular interest is the case in which $\Omega$ is a uniform set in the sense of T. Kamae  and $\mathcal{N}$ is chosen such that $\omega[\mathcal{N}]$ is a super-stationary set (see \cite{K1,K2}).

Another situation in which the defining condition of Definition~\ref{p*} applies is in the context of infinite permutations \cite{FA}. By an infinite permutation $\pi$ we mean a linear ordering on $\nats.$ Then for each finite permutation $u$ of $\{1,2,\ldots ,n\}$   we say that $u$ occurs in position $m$ of $\pi$ if the restriction of $\pi$ to $\{m, m+1, \ldots , m+n-1\}$ is equal to $u.$ Thus we may define the set $\pi\big|_u$  as the set of all $m\in \nats$ such that $u$ occurs in position $m$ in $\pi,$  and again the sets $\pi\big|_u$ (over all permutations $u$ of  $\{1,2,\ldots ,n\}$) determine a partition of $\nats.$ Hence each $p\in \beta \nats$ defines a map from the set of all infinite permutations into itself.}
\end{remark}

 In what follows, we will make use of the following key result in \cite{HS} (see also Theorem 1 in \cite{Bl} and Theorem 3.4 in \cite{VB2}):

 \begin{theorem}[Theorem 19.26 in \cite{HS}]\label{Berg} Let $(X,T)$ be a topological dynamical system. Then if two points $x,y \in X$ are proximal  with $y$ uniformly recurrent, then  there exists a minimal idempotent $p\in \beta \nats$ such that $p^*(x)=y.$
 \end{theorem}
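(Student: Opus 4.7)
The plan is to exploit the action of $\beta \nats$ on $X$ and on $X \times X$, together with the structure of minimal left ideals, to locate a minimal idempotent with the required property. The strategy proceeds in three stages: first produce some (not necessarily idempotent) $p \in \beta \nats$ for which $p^*(x) = p^*(y) = y$; then restrict attention to a fixed minimal left ideal; and finally invoke Ellis' theorem to upgrade to an idempotent.

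For the first stage I would pass to the product system $(X \times X, T \times T)$. Using continuity of $r \mapsto (r^*(x), r^*(y))$ together with the density of the principal ultrafilters in $\beta \nats$, the orbit closure of $(x, y)$ coincides with $\{(r^*(x), r^*(y)) \,|\, r \in \beta \nats\}$. Proximality supplies some $q$ with $q^*(x) = q^*(y) = z$; and since $z = q^*(y)$ lies in the orbit closure $Y$ of $y$, while $Y$ is minimal by uniform recurrence of $y$, the $T \times T$-orbit closure of the diagonal point $(z, z)$ is the entire diagonal $\{(w, w) \,|\, w \in Y\}$. In particular $(y, y)$ lies in the orbit closure of $(x, y)$, yielding the desired $p$.

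Next I would fix an arbitrary minimal left ideal $L$ of $\beta \nats$ and set
\[F = \{r \in \beta \nats \,|\, r^*(x) = y \text{ and } r^*(y) = y\}.\]
A direct check using Lemma~\ref{idempotent} shows that $F$ is a closed subsemigroup of $(\beta \nats, +)$: for $r_1, r_2 \in F$, $(r_1 + r_2)^*(x) = r_2^*(r_1^*(x)) = r_2^*(y) = y$, and similarly on $y$. To show $F \cap L \neq \emptyset$, I would argue (using Lemma~\ref{commutes} for $T$-invariance) that $L \cdot y := \{r^*(y) \,|\, r \in L\}$ is a non-empty closed $T$-invariant subset of the minimal system $Y$, hence equals $Y$; this produces some $r \in L$ with $r^*(y) = y$, and then $s := p + r$ lies in $L$ (since $\beta \nats + L \subseteq L$) and satisfies $s^*(x) = r^*(p^*(x)) = r^*(y) = y$ as well as $s^*(y) = y$, placing $s$ in $F \cap L$.

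Finally, $F \cap L$ is a non-empty closed subsemigroup of $\beta \nats$, so Ellis' theorem (Theorem~\ref{Ellis}) yields an idempotent $e \in F \cap L$. Because every minimal left ideal is contained in the smallest two-sided ideal $K(\beta \nats)$, the idempotent $e$ is automatically minimal, and by construction $e^*(x) = y$, as required. The step I expect to be the main obstacle is the first stage: one needs not merely that \emph{some} diagonal pair lies in the orbit closure of $(x, y)$ (which is immediate from proximality), but specifically that $(y, y)$ does, and this is where the uniform recurrence of $y$ enters crucially, via the minimality of $Y$ and the consequent identification of the orbit closure of $(z, z)$ with the full diagonal of $Y$. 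A secondary technicality is keeping the left-ideal convention $\beta \nats + L \subseteq L$ consistent with the reversed composition rule $(p+q)^* = q^* \circ p^*$ of Lemma~\ref{idempotent}, so that the construction $s = p + r$ is verifiably in $L$.
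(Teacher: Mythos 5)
The paper itself offers no proof of this statement (it is quoted from Theorem 19.26 of \cite{HS}), so your argument must stand on its own. Your first stage is correct, and the overall Auslander--Ellis flavour of the plan is the right one, but there is a genuine gap in stages two and three: you need the minimal left ideal $L$ to be \emph{closed}, and in the paper's convention it need not be. You use compactness of $L$ twice --- to conclude that $L\cdot y=\{r^*(y)\,|\,r\in L\}$ is closed (so that minimality of $Y$ forces $L\cdot y=Y$), and to apply Ellis' theorem (Theorem~\ref{Ellis}) to $F\cap L$. In the paper's semigroup only the maps $q\mapsto p+q$ are continuous, so the ideals that are automatically compact are the \emph{right} ideals $q+\beta\nats=\lambda_q(\beta\nats)$; a minimal left ideal has the form $\beta\nats+q$, the image of the right translation $r\mapsto r+q$, which is not assumed continuous, and such ideals are not closed in general (in the Hindman--Strauss convention these are exactly the minimal right ideals, for which closedness fails). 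You are locked into left ideals by the composition step $s=p+r$, needed to keep $s\in L$ under $(p+q)^*=q^*\circ p^*$ (Lemma~\ref{idempotent}); so the difficulty is structural, not a bookkeeping slip: with a closed minimal right ideal your composition leaves the ideal, while with a left ideal both compactness steps are unjustified. (A smaller wrinkle of the same kind: Lemma~\ref{commutes} only gives $T(r^*(y))=r^*(Ty)\in L\cdot(Ty)$; to land back in $L\cdot y$ you must write $T(r^*(y))=(\mathcal{U}_1+r)^*(y)$ and use $\mathcal{U}_1+r\in\beta\nats+L\subseteq L$.)

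The standard repair avoids composing $p$ with $r$ altogether. Observe that $P=\{u\in\beta\nats\,|\,u^*(x)=u^*(y)\}$ is nonempty (your stage-one ultrafilter $q$ lies in it), closed, and is a \emph{right} ideal in the paper's convention, since $(u+v)^*=v^*\circ u^*$ sends both $x$ and $y$ first through $u^*$. Hence $P$ contains a minimal right ideal $R$, and minimal right ideals are closed, being of the form $q+\beta\nats$. Your $L\cdot y$ argument now runs correctly for $R$: $T(r^*(y))=(r+\mathcal{U}_1)^*(y)$ and $r+\mathcal{U}_1\in R+\beta\nats\subseteq R$, so $R\cdot y$ is a nonempty closed $T$-invariant subset of the minimal system $Y$, whence some $r_0\in R$ has $r_0^*(y)=y$. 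Then $\{r\in R\,|\,r^*(y)=y\}$ is a nonempty closed subsemigroup, Ellis' theorem gives an idempotent $e$ in it, $e\in R\subseteq K(\beta\nats)$ is minimal, and since $e\in P$ one gets $e^*(x)=e^*(y)=y$ for free --- note that with this route even the refined conclusion of your first stage (that $(y,y)$, and not merely some diagonal point, lies in the orbit closure of $(x,y)$) is no longer needed. This is essentially the proof behind Theorem 19.26 of \cite{HS}.
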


 \noindent As a consequence we have

 \begin{theorem}\label{IPC} Let $\omega \in \mathcal{A}^\nats$ be a uniformly recurrent word, and let $u \in \mathcal{A}^+.$ Then $\omega \big|_u$ is an IP-set if and only if $\omega \big|_u$ is a central set.
 \end{theorem}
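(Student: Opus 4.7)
The proof naturally splits into two directions, with one being immediate and the other being the content of the theorem. Every minimal idempotent is in particular an idempotent, so by Lemma~\ref{IP}, if $\omega\big|_u$ is a central set then it is automatically an IP-set. The plan is therefore to focus on the converse.

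Suppose $\omega\big|_u$ is an IP-set. By Lemma~\ref{IP}, there exists an idempotent $p\in\beta\nats$ such that $u$ is a prefix of $y:=p^*(\omega)$. My strategy is to promote the (possibly non-minimal) idempotent $p$ to a minimal idempotent $q$ that has the same effect on $\omega$, then invoke Lemma~\ref{IP} once more. The mechanism for this promotion is Theorem~\ref{Berg}, which requires two ingredients: that $y$ is uniformly recurrent, and that $\omega$ and $y$ are proximal inside the shift orbit closure $X$ of $\omega$. Uniform recurrence of $y$ will be essentially free: $y=p\text{-}\lim_n T^n(\omega)\in X$, and since $\omega$ is uniformly recurrent the subshift $(X,T)$ is minimal, so every point of $X$ (including $y$) is uniformly recurrent.

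The crux of the argument is therefore to verify that $\omega$ and $y$ are proximal. Here I would exploit the idempotency of $p$ via Lemma~\ref{idempotent}: since $p+p=p$, we have
\[
p^*(y)=p^*(p^*(\omega))=(p+p)^*(\omega)=p^*(\omega)=y.
\]
Now fix any prefix $v$ of $y$. By the definition of $p^*$ applied once to $\omega$ and once to $y$, both $\omega\big|_v$ and $y\big|_v$ belong to $p$. Since $p$ is a filter, $\omega\big|_v\cap y\big|_v\in p$ and in particular is nonempty. So there exists $n\in\nats$ with $T^n(\omega)$ and $T^n(y)$ both beginning with $v$; taking $v$ of arbitrarily large length gives exactly the combinatorial proximality condition stated in Section~2.

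With proximality established, Theorem~\ref{Berg} produces a minimal idempotent $q\in\beta\nats$ with $q^*(\omega)=y$. Because $u$ is a prefix of $y=q^*(\omega)$, Lemma~\ref{IP} yields that $\omega\big|_u$ is a central set, completing the proof. I expect no serious obstacle: the only nontrivial step is the proximality argument, and the idempotency of $p$ delivers it immediately by turning a single ``large'' set into the intersection of two large sets $\omega\big|_v$ and $y\big|_v$.
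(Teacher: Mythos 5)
Your proof is correct and follows essentially the same route as the paper: the easy direction via Lemma~\ref{IP}, and for the converse, using idempotency of $p$ (Lemma~\ref{idempotent}) to get $p^*(y)=y$, deducing proximality of $\omega$ and $y=p^*(\omega)$ from $\omega\big|_v\cap y\big|_v\in p$ for every prefix $v$ of $y$, and then applying Theorem~\ref{Berg} to produce a minimal idempotent $q$ with $q^*(\omega)=y$. The only cosmetic difference is that you justify uniform recurrence of $y$ via minimality of the subshift, which the paper leaves implicit.
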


\begin{proof} For any $A\subset \nats$ we have that if $A$ is central then $A$ belongs to some minimal idempotent $p\in \beta \nats$ and hence in particular $A$ belongs to an idempotent in $\beta \nats.$ Hence by
Theorem~\ref{Hind} we have that $A$ is an IP-set. Now suppose that $\omega \big| _u$ is an IP-set. Then $\omega \big|_u$ belongs to some idempotent $p\in \beta \nats.$ Set $\nu=p^*(\omega).$  Then $u$ is a prefix of $\nu.$
Also, since $p$ is idempotent we have $p^*(\nu)=p^*(p^*(\omega))=p^*(\omega)=\nu.$ Hence for every prefix $v$ of $\nu$ we have that $\nu \big|_v\in p$ and $\omega \big |_v\in p$ and hence $\nu \big|_v \cap \omega \big |_v\in p.$ In particular $\nu \big|_v \cap \omega \big |_v\neq \emptyset.$ Hence $\omega$ and $\nu$ are proximal. Since $\omega$ is uniformly recurrent, it follows that $\nu$ is also uniformly recurrent. Hence by Theorem~\ref{Berg} there exists a minimal idempotent $q$ with $q^*(\omega)=\nu.$ Hence $\omega \big |_u \in q,$ whence $\omega \big|_u$  is central.
\end{proof}

\begin{remark}\rm{A special case of  Theorem~\ref{Berg} states that if $x$ and $y$ are uniformly recurrent infinite words, then  $x$ and $y$ are  proximal if and only if $p^*(x)=y$ for some idempotent ultrafilter $p \in \beta \nats.$
In the case of binary words, we could consider the following alternative notion: We say that $x$ and $y$ are {\it anti-proximal} if the set $\{n\in \nats\,|\, x_n\neq y_n\}$ is thick. For example the two fixed points $\mathbf{t}_0$ and $\mathbf{t}_1$ of the Thue-Morse morphism are anti-proximal. In \cite{BHPZ}, together with N. Hindman  we show that for every prefix $u$ of $\mathbf{t}_1,$ the set $\mathbf{t}_0\big|_u$ is finite FS-big. We recall that $A\subseteq \nats$ is {\it finite FS-big} if $\forall k$ there exists $(x_i)_{i=1}^k$ such that $\mbox{FS}(x_i)_{i=1}^k \subseteq A$ where
\[\mbox{FS}(x_i)_{i=1}^k=\{\sum_{i\in F}x_i\,|\, F\subseteq\{1,2,\ldots,k\}\}.\]
As in the case of  IP-sets, the property of being finite FS-big is partition regular, i.e., if $A\subseteq \nats$ is
finite FS-big and $A=\bigcup _{i=1}^rA_i,$ then some $A_i$ is finite FS-big (see \cite{BHPZ}).
In the context of binary words, the notions of proximality and anti-proximality are somewhat similar in the sense that in both cases the behavior of one word is strongly affected by the behavior of the other: In case $x$ and $y$ are proximal, then $x$ does as $y$ on a thick set while if $x$ and $y$ are anti-proximal, then $x$ and $y$ play opposites on a thick set. One might ask the question of finding an analogue  of Theorem~\ref{Berg} characterizing anti-proximality.
}\end{remark}

\section{A first analysis of some concrete examples}

\subsection{The Fibonacci word}
While most of the proofs of the results announced in the Introduction rely on the algebraic and topological properties of ultrafilters on $\nats$  and their links to IP-sets,  we begin by analyzing concretely a few examples generated by simple substitution rules. To establish that certain subsets of $\nats$ are IP-sets, we will use nothing more than the definition of IP-sets and the abstract numeration systems defined by substitutions first introduced by J.-M. Dumont and A. Thomas \cite{DT1, DT2}.

Let us begin with the  {\it Fibonacci} infinite word $\mathbf{f}=f_0f_1f_2\ldots \in \{0,1\}^\nats$ given by
\[{\bf f}=01001010010010100101001001010010
010100101001001010010\cdots\]

We set \[\mathbf{f}\big|_0=\{n\in \nats | f_n=0\}\] and
\[\mathbf{f}\big|_1=\{n\in \nats | f_n=1\}.\] So
$\mathbf{f}\big|_0=\{0,2,3,5,7,8,10,11,13,15,16, \ldots\}$ and
$\mathbf{f}\big|_1=\{1,4,6,9,12,14,17,\ldots\}.$ This defines the
Sturmian partition $\nats =\mathbf{f}\big|_0 \cup
\mathbf{f}\big|_1.$ Let us denote by $F_n$ the $n$th Fibonacci
number so that $F_0=1, F_1=2,F_2=3,\ldots .$ It is well known that
each positive integer $n$ has one or more representations when
expressed as a sum of distinct Fibonacci numbers, i.e.,
$n=\sum _{i=0}^k t_iF_i$  with $t_i\in \{0,1\}$ and $t_k=1.$
We call the associated $\{0,1\}$-word $t_kt_{k-1}\cdots t_0$ a representation of $n.$
For example, for $n=50$ we obtain the following $6$ representations
(arranged in
decreasing lexicographic order):
\[
\begin{array}{r}
10100100\\
10100011\\
10011100\\
10011011\\
1111100\\
1111011
\end{array}
\]
The lexicographically largest representation is  obtained  by applying the {\it greedy
algorithm}. This gives rise to a representation of $n$ of the form
$n=\sum _{i=0}^k t_iF_i$  with
$t_{i+1}t_{i}\neq 11$ for each $0\leq i \leq k-1.$ This
representation of $n$ is  called the {\it
Zeckendorff representation} \cite{Zeck} (a special case of the Dumont-Thomas numeration system \cite{DT1,DT2}). We shall write
 $\Zcal(n)=t_kt_{k-1}\ldots t_0.$
It follows immediately that $\Zcal(F_n)=10^n.$
The connection between $\Zcal(n)$  and the entry $f_n$ of the Fibonacci word $\mathbf{f}$ is given by the following well known fact:
$f_n=0$ whenever $\Zcal(n)$ ends in $0$ and $f_n=1$ whenever $\Zcal(n)$ ends in $1.$ Thus
 \[\mathbf{f}\big|_0=\{n\in \nats \,|\, \Zcal(n)\,\mbox {ends in}\, 0\}\] and \[\mathbf{f}\big|_1=\{n\in \nats \,| \, \Zcal(n)\,\mbox {ends in}\, 1\}.\]

We now consider the sequence
$(x_n)_{n\in \nats}$ given by $x_n= F_{2n+1.}$
It is readily verified that for each $A\in \Fin,$ the Zeckendorff
representation of $\sum _{n\in A}x_n$ ends in $10^{2m+1}$ where $m=\mbox{min}(A).$ In fact,  the symbolic sum of the individual Zeckendorff representations of each $x_n$ occurring in  $\sum _{n\in A}x_n$  does not involve any carry overs. Moreover the resulting expression does not contain any occurrences of $11$ and hence is equal to the Zeckendorff representation of $\sum _{n\in A}x_n.$  Thus every finite sum of the form $\sum _{n\in A}x_n$ with $A\in \Fin$ belongs to $\mathbf{f}\big|_0.$  Thus we have shown that $\mathbf{f}\big|_0$ is an IP-set.

We next verify that $\mathbf{f}\big|_1$ is not an IP-set, and hence  $\mathbf{f}\big|_0$ is an  IP$^*$-set.
We will use the following general observation. Consider a subset
$A\subset\nats$
partitioned into $k>0$ non-intersecting sets: $A=A_1\cup A_2 \cup
\cdots \cup A_k$. Suppose that for each $1\leq j \leq k$  there exists a positive integer $N$ (which may
depend on $j$) such that whenever $m_1, m_2,\ldots , m_N$ are
distinct elements of $ A_j,$
we have $\sum _{i=1}^{N} m_i \notin A$. Then $A$ is not an IP-set.
In fact, if $A$ were an IP-set, then for some $1\leq j\leq k,$  there would exist a sequence
$x_1<x_2<x_3<\cdots$ contained in $A_j$ such that  $\{\sum _{n\in F}x_n | F\in \Fin\}\subset A.$ \\

Let   $\alpha = \frac{3-\sqrt{5}}{2}.$ Then the Fibonacci word
$\mathbf{f}$ is the orbit of the point $\alpha$ under irrational
rotation $R_{\alpha}$ on the unit circle by $\alpha.$ Let $I$ be
the interval $[1-\alpha, 1)$ (the interval coded by $1$). So $n\in
\mathbf{f}\big|_1$ if and only if $R^n_{\alpha}(\alpha)=\{\alpha +
n\alpha\} =\{(n+1)\alpha\} \in I$.

\noindent Fix \[(1-\alpha)/3\leq \alpha' \leq (1-\alpha)/2\] and put
\[I_1=[1-\alpha, 1-\alpha')\,\,\,\,\mbox{and}\,\,\,\,I_2=[1-\alpha', 1).\] Since
$\alpha '\leq (1-\alpha)/2$ it follows that $\alpha'<\alpha.$ Also
for $j=1,2$ set \[A_j=\{n\in \nats \,|\, R^n(\alpha) \in I_j\}.\]  Thus
$A_1,A_2$ partitions the set $\mathbf{f}\big|_1.$ We now show that
$\mathbf{f}\big|_1$ is not an IP-set by showing that  the sum of
any three elements of $A_1$ belongs to $\mathbf{f}\big|_0$ and
that  the sum of any two elements of $A_2$ belongs to
$\mathbf{f}\big|_0.$

Now take any $n_1, n_2, n_3 \in A_1$ and set \[x_1= \{
(n_1+1)\alpha \}, \,\,\,x_2= \{ (n_2+1)\alpha \},\,\,\, x_3= \{ (n_3+1)\alpha
\} .\] Then $x_1,x_2,x_3 \in [1-\alpha, 1-\alpha')$ and $n_1+n_2+n_3$ corresponds to
the point \[\{ (n_1+n_2+n_3+1)\alpha \}=\{ x_1+x_2+x_3-2\alpha \}.\]
Since $x_1, x_2, x_3 \in [1-\alpha, 1-\alpha')$, we have
\[\{x_1+x_2+x_3-2\alpha\}\in [\{3-5\alpha
\},\{3-3\alpha'-2\alpha\}).\] Since $\alpha'\geq
\frac{1-\alpha}{3}$ it follows that
 \[2-3\alpha'-2\alpha \leq 1-\alpha,\] and hence \[\{2-3\alpha'-2\alpha \}\leq 1-\alpha,\] which gives
 \[\{3-3\alpha'-2\alpha \}\leq 1-\alpha\] as required.

Similarly take any $n_1, n_2 \in A_2.$ Set \[x_1= \{ (n_1+1)\alpha
\},\,\,\, x_2= \{ (n_2+1)\alpha \}\]
so that $x_1,x_2 \in [1-\alpha', 1)$. Then $n_1+n_2$
corresponds to the point \[\{ (n_1+n_2+1)\alpha \}=\{
x_1+x_2-\alpha \}.\] Since $x_1, x_2 \in [1-\alpha',1)$, we have
\[\{x_1+x_2-\alpha\}\in [\{2-2\alpha'-\alpha\},1-\alpha).\] Since
\[\alpha'\leq \frac{1-\alpha}{2}\] it follows that
\[\{1-2\alpha'-\alpha \}\geq 0,\] and hence
\[\{2-2\alpha'-\alpha\}\geq 0.\]

The above arguments may be generalized to show that $\mathbf{f}\big|_u$ is an IP$^*$-set for every prefix $u$ of $\mathbf{f}.$\\

In contrast, let us consider the sets $\mathbf{g}\big|_0$ and $\mathbf{g}\big|_1$ where $\mathbf{g}=0\mathbf{f} = 001001010010010\ldots .$
Thus, \[\mathbf{g}\big|_0=\{n\in \nats \,|\, g_n=0\}=\{0\} \cup \{n\geq 1 \,|\, f_{n-1}=0\}.\]
Consider the sequence $(y_n)_{n\in \nats}$ defined by $y_n=F_{2n+2}.$ It is readily verified that
$\Zcal(y_n-1)=(10)^{n+1}$ and hence each $y_n$ belongs to $\mathbf{g}\big|_0.$
Now fix $A\in \Fin.$ Since the Zeckendorff
representation of $\sum _{n\in A}y_n$ ends in $10^{2m+2}$ where $m=\mbox{min}(A),$ it follows that
$\Zcal(\sum _{n\in A}y_n -1)$ ends in $(10)^{m+1},$ and hence $\sum _{n\in A}y_n \in \mathbf{g}\big|_0.$
Thus, $\mathbf{g}\big|_0$ is an IP-set.
Similarly, it is readily verified that for each $A\in \Fin,$ we have that $\sum _{n\in A}x_n\in \mathbf{g}\big|_1$ where $x_n=F_{2n+1}.$  Thus this time we obtain the Sturmian decomposition $\nats =\mathbf{g}\big|_0 \cup \mathbf{g}\big|_1$ in which both sets $\mathbf{g}\big|_0$ and  $\mathbf{g}\big|_1$ are IP-sets, and hence central sets. In this case, neither
$\mathbf{g}\big|_0$ nor $\mathbf{g}\big|_1$ is an IP$^*$-set.  Once again, these arguments may be extended to show that both $\mathbf{g}\big|_{0u}$ and $\mathbf{g}\big|_{1u}$ are central sets for any prefix $u$ of $\mathbf{f}$ and hence neither set is an IP$^*$-set.

\vspace{.1 in}

\noindent In summary, by Theorem~\ref{IPC} we have:

\begin{proposition} Let $\mathbf{f}$ denote the Fibonacci word. Then for every prefix $u$ of $\mathbf{f}$ the set
$\mathbf{f}\big|_u$ is an IP$^*$-set (and hence a central$^*$ set). Setting $\mathbf{g}=0\mathbf{f}$ we have that for every prefix $u$ of $\mathbf{f}$ the sets $\mathbf{g}\big|_{0u}$ and $\mathbf{g}\big|_{1u}$ are both IP-sets (resp. central sets).
\end{proposition}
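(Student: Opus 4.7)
The plan is to translate everything via the mapping $p^{*}$ of Definition~\ref{p*}. By Lemma~\ref{IP}, $\mathbf{f}\big|_u$ is IP$^{*}$ iff $u$ is a prefix of $p^{*}(\mathbf{f})$ for \emph{every} idempotent $p\in\beta\nats$ (this automatically gives central$^{*}$, since every minimal idempotent is an idempotent), while $\mathbf{g}\big|_{0u}$ (resp.\ $\mathbf{g}\big|_{1u}$) is an IP-set iff $0u$ (resp.\ $1u$) is a prefix of $p^{*}(\mathbf{g})$ for \emph{some} idempotent $p$. Uniform recurrence of $\mathbf{g}$ and Theorem~\ref{IPC} then upgrade the IP assertions to the central assertions for free. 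Thus the entire proposition reduces to controlling the images of $\mathbf{f}$ and $\mathbf{g}$ under $p^{*}$ inside the Sturmian subshift $\Omega$.

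For the first half I would prove $p^{*}(\mathbf{f})=\mathbf{f}$ for every idempotent $p$. Using Lemma~\ref{idempotent} (which gives $p^{*}(p^{*}(\omega))=p^{*}(\omega)$), the same proximality argument that appears in the proof of Theorem~\ref{IPC} shows $p^{*}(\mathbf{f})$ is uniformly recurrent in $\Omega$ and proximal to $\mathbf{f}$. Hence it suffices to check that $\mathbf{f}$ is the unique element of $\Omega$ proximal to itself. Using the mechanical realisation $\mathbf{f}=s_{\alpha,\alpha}$ with $\alpha=(3-\sqrt{5})/2$, the factor map $\pi\colon\Omega\to[0,1)$ induced by the rotation $R_{\alpha}$ has the property that two points of $\Omega$ are proximal iff they share the same image, and the fibre of $\pi$ over $\rho$ is a singleton unless $\rho$ lies in the backward rotation orbit $\{-n\alpha\bmod 1:n\ge 0\}$ of the partition endpoint. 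Since $\pi(\mathbf{f})=\alpha$ and $\alpha=-n\alpha\bmod 1$ would force $(n+1)\alpha\in\mathbb{Z}$, the irrationality of $\alpha$ rules this out; the fibre above $\alpha$ is a singleton and $p^{*}(\mathbf{f})=\mathbf{f}$.

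For the second half I would exhibit two minimal idempotents $p,q\in\beta\nats$ with $p^{*}(\mathbf{g})=0\mathbf{f}=\mathbf{g}$ and $q^{*}(\mathbf{g})=1\mathbf{f}$. Since $0u$ is a prefix of $0\mathbf{f}$ and $1u$ is a prefix of $1\mathbf{f}$ for every prefix $u$ of $\mathbf{f}$, Lemma~\ref{IP} then delivers the conclusion. The idempotent $p$ is produced by Theorem~\ref{Berg} applied to the trivially proximal pair $(x,y)=(\mathbf{g},\mathbf{g})$. For $q$, I note that $T(0\mathbf{f})=T(1\mathbf{f})=\mathbf{f}$, so the pair $(0\mathbf{f},1\mathbf{f})$ is in fact asymptotic (an even stronger form of proximality); Theorem~\ref{Berg} with $x=\mathbf{g}=0\mathbf{f}$ and $y=1\mathbf{f}$ then supplies the required $q$.

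The main obstacle, and essentially the only piece of genuine content, is the singleton/doubleton analysis of the fibres of $\pi$ in the first half: identifying which Sturmian words in $\Omega$ are proximal to $\mathbf{f}$. The geometric computation above is short once one accepts the dictionary between Sturmian words and circle rotations, but it is precisely what separates $\mathbf{f}$ (whose image $\alpha$ lies outside the bad orbit) from words like $0\mathbf{f}$ and $1\mathbf{f}$ (both mapping to the doubleton point $0$), and it is this same dichotomy that will later drive the split between Theorems~\ref{sturm1} and~\ref{sturm2}.
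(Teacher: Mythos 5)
Your argument is correct, but it follows a genuinely different route from the paper's own proof of this proposition: in effect you specialize to the Fibonacci word the machinery the paper only develops later for Theorems~\ref{sturm1} and~\ref{sturm2}. The paper's Section-4 proof is deliberately elementary and constructive: it uses the Zeckendorff (Dumont--Thomas) numeration system to exhibit explicit witnesses (the sequences $x_n=F_{2n+1}$ and $y_n=F_{2n+2}$, whose finite sums are shown to land in $\mathbf{f}\big|_0$, $\mathbf{g}\big|_1$ and $\mathbf{g}\big|_0$ respectively), and then an explicit rotation estimate --- splitting the interval coded by $1$ into two subintervals and checking that sums of three, respectively two, return times fall in the $0$-interval --- combined with partition regularity to show $\mathbf{f}\big|_1$ is \emph{not} an IP-set, hence $\mathbf{f}\big|_0$ is IP$^*$; the passage to arbitrary prefixes $u$ is only sketched there, and the upgrade from IP to central statements is via Theorem~\ref{IPC}, exactly as you do. Your route instead runs everything through $p^*$: the idempotency argument from the proof of Theorem~\ref{IPC} (via Lemma~\ref{idempotent}) shows $p^*(\mathbf{f})$ lies in $\Omega$ and is proximal to $\mathbf{f}$, and your singleton/doubleton fibre analysis of the rotation factor over $\alpha$ is precisely the content of the paper's Lemma~\ref{key} and Lemma~\ref{fix}, giving $p^*(\mathbf{f})=\mathbf{f}$ for every idempotent $p$ and hence the IP$^*$/central$^*$ half for all prefixes at once; for the second half, Theorem~\ref{Berg} applied to the proximal pairs $(\mathbf{g},\mathbf{g})$ and $(0\mathbf{f},1\mathbf{f})$ (asymptotic since $T(0\mathbf{f})=T(1\mathbf{f})=\mathbf{f}$) is exactly the argument the paper uses to prove Theorem~\ref{sturm2}, of which your claim is the case $n_0=1$. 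What the paper's proof buys is self-containedness and explicit IP witnesses with no appeal to the Stone-\v Cech machinery beyond definitions; what your proof buys is brevity, a clean treatment of all prefixes $u$ without the ``the above arguments may be generalized'' step, and a transparent preview of the general Sturmian theorems --- at the cost of importing the proximality dictionary (your ``proximal iff same image under $\pi$'' assertion is exactly what Lemma~\ref{key} establishes, so if you use this route you should prove that bounded-gap disagreement statement rather than merely invoke the dictionary).
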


\subsection{The $m$-bonacci word}
The above analysis extends more generally to the so-called $m$-bonacci word. Fix a positive integer $m\geq 2,$ and
let $\mathbf{t}=t_0t_1t_2\ldots \in \{0,1,\ldots , m-1\}^\nats$ denote the {\it $m$-bonacci infinite word} fixed by the substitution \[\sigma_m :  \{0,1,\ldots , m-1\} \rightarrow \{0,1,\ldots , m-1\}^*\] given by

 \[ \sigma_m(i) =
\left\{\begin{array}{ll} 0(i+1) \,\,\,&\mbox{for}\,\, 0\leq i <m-1\\
0 \,\,\,&\mbox{for}\,\, i=m-1
\end{array}
\right.
\]

\noindent Using the associated Dumont-Thomas numeration system, we will show:

\begin{proposition}\label{mbonacci} Let $m\geq 2,$ and consider the partition of $\nats$ given by
\[\nats = \bigcup_{0\leq k \leq m-1} \mathbf{g}\big|_k\]
where $\mathbf{g}=0\mathbf{t}\in  \{0,1,\ldots , m-1\}^\nats.$ Then for each $0\leq k \leq m-1$ the set $\mathbf{g}\big|_k$ is an IP-set (resp. central set).
\end{proposition}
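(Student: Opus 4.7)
The plan is to adapt the Fibonacci analysis to the $m$-bonacci case using the Dumont--Thomas numeration associated to $\sigma_m$. Set $T_j = |\sigma_m^j(0)|$, so that $(T_j)$ satisfies $T_j = T_{j-1} + \cdots + T_{j-m}$ for $j \geq m$ with $T_i = 2^i$ for $0 \leq i \leq m-1$. Each $n \in \nats$ admits a unique greedy representation $n = \sum_{i} d_i T_i$ with $d_i \in \{0,1\}$, and a $\{0,1\}$-word arises as such a representation precisely when it avoids the factor $1^m$. A first step is the descriptive lemma, proved by unwinding the recursion $\sigma_m^{q}(0) = \sigma_m^{q-1}(0)\sigma_m^{q-1}(1)$ which provides the block structure of the length-$T_q$ prefix of $\mathbf{t}$: the letter $t_n$ equals the number of trailing $1$s in the greedy representation of $n$ (a value in $\{0,1,\ldots,m-1\}$). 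Consequently, for $n \geq 1$ one has $n \in \mathbf{g}|_k$ if and only if the greedy representation of $n-1$ has exactly $k$ trailing $1$s, together with the convention $0 \in \mathbf{g}|_0$.

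For each $k \in \{0,1,\ldots,m-1\}$ I would realize $\mathbf{g}|_k$ as an IP-set by exhibiting an explicit generating sequence
\[
y_j^{(k)} \;=\; T_{mj + c_k}, \qquad c_0 = m, \quad c_k = k \text{ for } 1 \leq k \leq m-1.
\]
For any finite $A \subseteq \nats$ the sum $S = \sum_{j \in A} y_j^{(k)}$ has greedy representation with $1$s precisely at the positions $\{mj + c_k : j \in A\}$: since these positions are pairwise separated by multiples of $m$, no $1^m$ factor can arise. The essential auxiliary identity, which I would establish by induction on $q$ via $T_j - 1 = T_{j-1} + \cdots + T_{j-m+1} + (T_{j-m} - 1)$, is that the greedy representation of $T_{mq+r} - 1$ (for $0 \leq r < m$) equals $(1^{m-1} 0)^q \, 1^r$. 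Applied at the lowest position $c_k + m \min A$ of the representation of $S$, where the residue $r_0 = c_k \bmod m$ coincides with $k$ in both cases, this formula lets me replace the lowest $1$ of the representation of $S$ by the corresponding breakdown to obtain the greedy representation of $S - 1$, which ends in exactly $r_0 = k$ trailing $1$s. Hence $S \in \mathbf{g}|_k$.

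The main obstacle is the combinatorial bookkeeping needed to confirm that this replacement yields a valid greedy representation, i.e., contains no $1^m$ factor, either internally to the breakdown (where maximal runs of $1$s have length exactly $m-1$, separated by single $0$s) or at its interface with the remaining $1$s of the representation of $S$, where the topmost $1$ of the breakdown sits at position $c_k + m \min A - 1$ and is followed by exactly $m$ zeros before the next surviving $1$ at position $c_k + m(\min A + 1)$. Once this verification is complete, each $\mathbf{g}|_k$ is an IP-set. Since $\mathbf{g} = 0\mathbf{t}$ is uniformly recurrent (as $\mathbf{t}$ is the fixed point of the primitive substitution $\sigma_m$), Theorem~\ref{IPC} promotes each $\mathbf{g}|_k$ to a central set, completing the proof.
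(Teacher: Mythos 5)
Your proposal is correct and takes essentially the same route as the paper's own proof: the $m$-Zeckendorff (Dumont--Thomas) numeration, the fact that $t_n$ equals the number of trailing $1$s of $\Zcal_m(n)$, generating sequences of the form $T_{mj+k}$ whose finite sums $S$ have $\Zcal_m(S-1)$ ending in $(1^{m-1}0)^q1^k$, and the appeal to Theorem~\ref{IPC} to upgrade IP-sets to central sets. The only (harmless) deviation is your choice $c_0=m$ in the case $k=0$, where the paper simply uses $x_n=T_{mn}$; both choices work.
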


The proof is a simple  extension of the ideas outlined above in the case of the Fibonacci word.
For each $m\geq 2,$ we define the $m$-bonacci numbers by
$T_k=2^k$ for $0\leq k\leq m-1$ and $T_k=T_{k-1} + T_{k-2} +\cdots +T_{k-m}$
for $k\geq m.$ When $m=2,$ these are the usual Fibonacci numbers.
Each positive integer $n$ may be written
 in one or more ways in the form
 $n=\sum_{i=1}^k t_iT_{k-i}$ where $t_i\in \{0,1\}$ and $t_1=1.$
By applying the greedy algorithm,
one obtains a representation of $n$ of the form $w=t_1t_2\cdots t_k$ with the
property that
$w$ does not contain $m$ consecutive $1$'s.
Such a representation of $n$ is necessarily unique and is called the $m$-{\it Zeckendorff
representation}
of $n,$ denoted $\Zcal_m(n)$ (see \cite{EZ}).  Thus $\Zcal_m(T_n)=10^n$ for $n\geq 0.$

\begin{proof} Fix $0\leq k \leq m-1.$ We will show that the set $\mathbf{g}\big|_k$ is an IP-set.
 It is well known that
$t_n=k$ if and only if $\Zcal_m(n)$ ends in $01^k.$
Hence
\[\mathbf{g}\big|_k=\{ n\in \nats \,|\, g_n=k\} = \{n\in \nats \,|\, t_{n-1}=k\} = \{n\in \nats \,|\, \Zcal_m(n-1)\,\mbox{ends in}\, 01^k\}.\]

Consider the sequence $(x_n)_{n\in \nats}$ given by $x_n=T_{mn+k}.$ It is readily verified for any finite subset $A\subset \nats,$  the $m$-Zeckendorff
representation of the finite sum $s=\sum_{n\in A}x_n$ ends in
$10^{mr+k}$ where $r=\mbox{min}(A)$ and hence the $m$-Zeckendorff
representation of $s-1$ ends in $(1^{m-1}0)^r1^k$ and hence $s\in \mathbf{g}\big|_k$ as required.

\noindent Having established that each of the sets $\mathbf{g}\big|_k$ is a central set (for $0\leq k\leq m-1),$ it follows that no  $\mathbf{g}\big|_k$ is an IP$^*$-set.

\end{proof}

\section{Sturmian partitions \& central sets}

We now study more generally partitions of $\nats$ generated by Sturmian words and prove theorems~\ref{sturm1} and \ref{sturm2}.
Throughout this section $\omega=\omega_0\omega_1\omega_2\ldots  \in \{0,1\}^\nats$ will denote a Sturmian word,
$\F$  the set of all factors of $\omega,$
and $(\Omega, T)$ the subshift generated by $\omega,$ where $T$ denotes the shift map.  We denote by
 $\tilde \omega \in \Omega$ the characteristic word.
 \begin{lemma}\label{unique} If $\omega, \omega ', \omega'' \in \Omega$ are such that $T^{n_0}(\omega)=T^{n_0}(\omega')=T^{n_0}(\omega''),$ then Card$\{\omega, \omega ', \omega''\}\leq 2.$
\end{lemma}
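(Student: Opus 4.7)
The argument rests on two basic facts about the Sturmian subshift $(\Omega,T)$. First, because $\omega$ has factor complexity $p_\omega(n)=n+1$, there is a unique left special factor of each length, so for every $\nu\in\Omega$ the preimage set $T^{-1}(\nu)\cap\Omega$ has at most two elements, and it has exactly two --- namely $0\tilde\omega$ and $1\tilde\omega$ --- if and only if $\nu=\tilde\omega$ (the characteristic word being the unique element of $\Omega$ all of whose prefixes are left special). Second, Sturmian words are aperiodic, so for any $z\in\Omega$ there is at most one index $k\ge 0$ with $T^k(z)=\tilde\omega$: two such indices $k_1<k_2$ would give $T^{k_2-k_1}(\tilde\omega)=\tilde\omega$, making $\tilde\omega$ periodic.

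From the first fact I extract a \emph{merging principle}: if $x\neq y$ in $\Omega$ with $T^{n_0}(x)=T^{n_0}(y)$, and $i$ is the least index $\ge 1$ with $T^i(x)=T^i(y)$, then $1\le i\le n_0$, the elements $T^{i-1}(x)$ and $T^{i-1}(y)$ are two distinct $T$-preimages in $\Omega$ of their common image, and therefore $T^i(x)=\tilde\omega$ and $\{T^{i-1}(x),T^{i-1}(y)\}=\{0\tilde\omega,1\tilde\omega\}$.

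Now suppose for contradiction that $\omega,\omega',\omega''$ are three pairwise distinct elements of $\Omega$ sharing the same $T^{n_0}$-image. Applying the merging principle to each of the three pairs produces merging indices $i_{12},i_{13},i_{23}$, and at each merging index the corresponding shifted words both equal $\tilde\omega$. But by the second (aperiodicity) fact, the index at which a given orbit visits $\tilde\omega$ is unique: applied to $\omega$ this forces $i_{12}=i_{13}$, and applied to $\omega'$ (using that $T^{i_{12}}(\omega')=\tilde\omega$ and $T^{i_{23}}(\omega')=\tilde\omega$) it forces $i_{23}=i_{12}$. Call the common value $i$. Then $T^{i-1}(\omega),T^{i-1}(\omega'),T^{i-1}(\omega'')$ are three pairwise distinct elements (by the minimality of $i$ in each pair) of the two-element set $\{0\tilde\omega,1\tilde\omega\}$, a contradiction.

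The only delicate point is the coincidence of the three merging indices --- this is precisely where aperiodicity of Sturmian words enters, and where an analogous statement would fail if $\tilde\omega$ were allowed to recur in some forward orbit. Everything else is direct bookkeeping from the binary preimage structure that Sturmian complexity $n+1$ imposes.
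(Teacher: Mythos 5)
Your argument is correct and is exactly the paper's intended proof: the paper disposes of this lemma in one line, citing the uniqueness of the characteristic word and its aperiodicity, and your merging principle plus the coincidence of merging indices is precisely the bookkeeping those two facts are meant to imply. You have simply written out the details that the paper leaves to the reader.
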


\begin{proof}This follows immediately from the fact that $\Omega$ contains a unique characteristic word and that
this word is aperiodic.
\end{proof}

  \noindent We will make use of the following key lemma which essentially says that two distinct Sturmian words $\omega $ and $\omega ' $ are proximal if and only if $T^n(\omega)=T^n(\omega')=\tilde \omega$ for some $n\geq 1.$

 \begin{lemma}\label{key} Let $\omega $ and $\omega ' $ be distinct  elements of $ \Omega .$ Then either $T^n(\omega)=T^n(\omega')=\tilde \omega$ for some $n\geq 1,$   or there exists $N>0$ such that $\omega_n\omega_{n+1}\ldots \omega_{n+N}\neq \omega'_n\omega'_{n+1}\ldots \omega'_{n+N}$ for every $n\in \nats .$
 \end{lemma}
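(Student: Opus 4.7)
The plan is to use the mechanical word representation of elements of $\Omega$. Let $\alpha$ be the common slope. Every element of $\Omega$ is of the form $s_{\alpha,\rho}$ or $s'_{\alpha,\rho}$ for a unique base point $\rho\in[0,1)$, and the two coincide unless $\rho$ lies in the $R_\alpha$-orbit of $0$. Let $\rho$ and $\rho'$ denote the base points of $\omega$ and $\omega'$ respectively. The argument splits according to whether $\rho=\rho'$ or not.

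Suppose first that $\rho\neq\rho'$, and set $\delta=\{\rho'-\rho\}\in(0,1)$. The set $D_\delta\subseteq[0,1)$ consisting of those $x$ for which $x$ and $\{x+\delta\}$ lie in different pieces of the partition into $I_0$ and $I_1$ is a finite union of arcs of positive total measure. By minimality of $R_\alpha$ combined with equicontinuity, every forward $R_\alpha$-orbit returns to $D_\delta$ with uniformly bounded gaps; call $M$ such a bound. Apart from the at most two indices $n$ at which $\alpha n+\rho$ or $\alpha n+\rho'$ is an integer (the only positions where the $s$ versus $s'$ convention can matter), the condition $\{\rho+n\alpha\}\in D_\delta$ forces $\omega_n\neq\omega'_n$. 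Hence the disagreement set $\{n\in\nats:\omega_n\neq\omega'_n\}$ is syndetic, and choosing any $N$ larger than $M$ plus the finite exceptional set yields the second alternative of the lemma.

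Now suppose $\rho=\rho'$. Since $\omega\neq\omega'$, one of the two words is $s_{\alpha,\rho}$ and the other $s'_{\alpha,\rho}$, which forces $\rho$ to lie in the $R_\alpha$-orbit of $0$; write $\rho+k\alpha\equiv 0\pmod 1$ for some integer $k\geq 0$. A direct computation from the floor/ceiling formulas shows that $s_{\alpha,\rho}(n)\neq s'_{\alpha,\rho}(n)$ exactly when one of $\alpha n+\rho$ and $\alpha(n+1)+\rho$ is an integer. By irrationality of $\alpha$ this forces $n\in\{k-1,k\}$ (with $n=-1$ discarded when $k=0$), so the two sequences coincide for all $n\geq k+1$. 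Therefore $T^{k+1}(\omega)=T^{k+1}(\omega')$, and comparing the defining formulas identifies the common tail with $s_{\alpha,\alpha}=\tilde\omega$, giving the first alternative with $n=k+1\geq 1$.

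The main delicate point is the syndetic-return step in the first case: one must confirm that the positions where the $R_\alpha$-orbit of $\rho$ meets $D_\delta$ really correspond, up to finitely many boundary exceptions, to the positions of disagreement between $\omega$ and $\omega'$, irrespective of which of the two conventions each word uses. This is pure bookkeeping, since $s_{\alpha,\cdot}$ and $s'_{\alpha,\cdot}$ differ only on the countable boundary orbit of $0$, so at most finitely many indices in any forward orbit can be affected by a convention mismatch, which is harmless for the syndetic-gap conclusion.
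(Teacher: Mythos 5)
Your proof is correct and follows essentially the same route as the paper: both represent the words as mechanical/rotation codings with slope $\alpha$, treat the same-base-point case by noting the $s$ versus $s'$ codings disagree only at the (at most two) boundary visits so the common tail is $s_{\alpha,\alpha}=\tilde\omega$, and treat the distinct-base-point case by showing the orbit returns to the disagreement arcs with uniformly bounded gaps via density/equidistribution of the irrational rotation. Your handling of the $s$/$s'$ convention mismatches is in fact slightly more explicit than the paper's, which simply fixes one coding convention for both words.
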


 \begin{proof}We will use a definition of Sturmian words via
 rotations, which we recalled in Section 2. Notice that $\tilde \omega=s_{\alpha, \alpha}=s'_{\alpha, \alpha}$, and
singular words correspond to the case when the orbit of a point
under rotation map goes through the point $\alpha$. If $s_{\alpha,
\rho}$ is non-singular, then $s_{\alpha, \rho}=s'_{\alpha, \rho}$.
If $w \neq w'$ are singular words defined by rotations of the same
point, i. e., $w=s_{\alpha, \rho}$, $w'=s'_{\alpha, \rho}$, then
they differ only when they pass through $1-\alpha$ and $0$, i. e.,
in maximum two points, so there exists $n_0\geq1$ such that
$T^{n_0}(\omega)=T^{n_0}(\omega')=\tilde \omega$.

Now consider the case when $w$, $w'$ are defined by rotations of two
different points $\rho$, $\rho'$, $0\leq\rho<\rho'<1$. To be definite, let
us consider the interval exchange of $I_0$ and $I_1$ for both $w$ and
$w'$. We should prove that there there exists $N>0$ such that
\[\omega_n\omega_{n+1}\ldots \omega_{n+N}\neq \omega'_n\omega'_{n+1}\ldots
\omega'_{n+N}\] for every $n\in \nats .$ We have $w_i \neq w'_i$ if and only if
$w_i\in I_0$, $w'_i\in I_1$ or $w_i\in I_1$, $w'_i\in I_0$. This condition
is equivalent to \[w_i\in [1-\alpha-(\rho'-\rho), 1-\alpha)\cup
[1-(\rho'-\rho), 1).\] The distribution of points from the orbit of any
point
$\theta$ under rotation by $\alpha$ is dense, it means that for every
$\epsilon$ there exists $N(\epsilon)$, such that after
$N(\epsilon)$ iterations points split the interval $[0,1)$ into
intervals of length less than $\epsilon$. Putting
$\epsilon=\rho'-\rho$, we get that every $N=N(\epsilon)$ consecutive
iterations there will be a point in every interval of length $\rho'-\rho$,
so there are points in $[1-\alpha-(\rho'-\rho),1-\alpha)$ and
$[1-(\rho'-\rho), 1)$ every $N$ iterations, and hence for every $n$ there
exists $i\in [n, n+N-1]$ with $w_i\neq w'_i$.

 \end{proof}

 \noindent We first consider the case of nonsingular Sturmian words:

 \begin{lemma}\label{fix} Let $\omega\in \{0,1\}^\nats$ be a nonsingular Sturmian word and $p\in \beta \nats$ an idempotent ultrafilter. Then $p^*(\omega)=\omega.$  \end{lemma}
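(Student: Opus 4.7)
The plan is to combine the idempotent fixed-point property (Lemma~\ref{idempotent}) with the dichotomy for distinct Sturmian words (Lemma~\ref{key}). The whole argument should be short because all the heavy lifting is already in those two lemmas.

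First I would set $\nu=p^*(\omega)$ and check that $\nu\in\Omega$: every prefix $u$ of $\nu$ satisfies $\omega\big|_u\in p$, hence $\omega\big|_u\neq\emptyset$, so $u\in\F$. Thus every prefix of $\nu$ is a factor of $\omega$, which (since $\omega$ is uniformly recurrent, so $\Omega$ is the set of all infinite words whose factors lie in $\F$) puts $\nu$ in $\Omega$.

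Next I would invoke idempotency. Since $p+p=p$, Lemma~\ref{idempotent} gives $p^*(\nu)=p^*(p^*(\omega))=p^*(\omega)=\nu$. Consequently, for every prefix $u$ of $\nu$ we have simultaneously $\omega\big|_u\in p$ (because $u$ is a prefix of $\nu=p^*(\omega)$) and $\nu\big|_u\in p$ (because $u$ is a prefix of $\nu=p^*(\nu)$). Since $p$ is a filter, $\omega\big|_u\cap\nu\big|_u\in p$ and in particular is nonempty. As $u$ ranges over all prefixes of $\nu$, this exactly says that for every $N$ there exists $n$ with $\omega_n\omega_{n+1}\cdots\omega_{n+N}=\nu_n\nu_{n+1}\cdots\nu_{n+N}$, i.e. $\omega$ and $\nu$ are proximal.

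Finally I would apply Lemma~\ref{key} to conclude. Suppose for contradiction that $\omega\neq\nu$. Then Lemma~\ref{key} forces one of two alternatives: either $T^n(\omega)=T^n(\nu)=\tilde\omega$ for some $n\geq1$, or $\omega$ and $\nu$ fail to be proximal. The second alternative contradicts what we just established. The first alternative would give $T^n(\omega)=\tilde\omega$ for some $n\geq1$, contradicting the hypothesis that $\omega$ is nonsingular. Hence $\omega=\nu=p^*(\omega)$, as claimed. There is no real obstacle here; the only thing one must be careful about is verifying that $\nu$ lives in $\Omega$ before invoking Lemma~\ref{key}, and that the proximality witness extracted from the ultrafilter really matches the combinatorial definition of proximality used in that lemma.
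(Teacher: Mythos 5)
Your proof is correct and follows essentially the same route as the paper's: both arguments combine Lemma~\ref{idempotent} (to get $p^*(p^*(\omega))=p^*(\omega)$ and hence $\omega\big|_u\cap p^*(\omega)\big|_u\in p$ for every prefix $u$ of $p^*(\omega)$) with Lemma~\ref{key} and nonsingularity to force $p^*(\omega)=\omega$. The only cosmetic differences are that you phrase the intermediate step explicitly as proximality and verify that $p^*(\omega)\in\Omega$ before invoking Lemma~\ref{key}, a point the paper leaves implicit.
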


 \begin{proof} Suppose to the contrary that $p^*(\omega)\neq \omega.$  Then since $\omega$ is nonsingular,  Lemma~\ref{key} implies that for all sufficiently long factors $u$ of $\omega,$ we have that $\omega \big|_u \cap p^*(\omega)\big|_u = \emptyset .$  But, by Lemma~\ref{idempotent} we have
 $p^*(p^*(\omega))=p^*(\omega),$ that is the image under $p^*$ of $\omega$ and $p^*(\omega)$ coincides. It follows by definition of $p^*$ that for every prefix $u$ of $p^*(\omega)$ we have $\omega \big|_u \in p$ and $p^*(\omega)\big|_u \in p$ and hence $\omega \big|_u \cap p^*(\omega)\big|_u \in p,$  a contradiction.
 \end{proof}

 \begin{proof}[Proof of Theorem~\ref{sturm1}]Let $\omega $ be a nonsingular Sturmian word,  $u$ a prefix of $\omega,$ and $p\in \beta \nats$ an idempotent ultrafilter.  Then by Lemma~\ref{fix} $u$ is a prefix of $p^*(\omega)$ and hence
 $\omega \big|_u \in p.$ Thus for each prefix $u$ of $\omega$ the set $\omega \big|_u$ belongs to every idempotent ultrafilter and hence is an IP$^*$-set. It follows that if $v\in F$ is not a prefix of $\omega,$ then $\omega\big|_v$ is not an IP-set. Finally, let $v$ be any factor of $\omega$ and $n\in \nats.$ Then $\omega \big|_v-n = T^n(\omega)\big|_v.$  If $n\in \omega \big|_v,$ then $v$ is a prefix of $T^n(\omega)$ from which it follows that
 \[\omega \big|_v-n = T^n(\omega)\big|_v \in p.\]
 Hence $\omega \big|_v-n$ is an IP$^*$-set
 \end{proof}

\noindent As a consequence of the above theorem we have

\begin{corollary}\label{C2} Let $\omega$ and $\omega'$ be two nonsingular Sturmian words, not necessarily of the same slope. Then for every prefix $u$ of $\omega$ and every prefix $u'$ of $\omega'$ we have that $\omega\big|_u \cap \omega'\big|_{u'}$ is an IP$^*$-set (resp. central$^*$ set), in particular the intersection is infinite.
\end{corollary}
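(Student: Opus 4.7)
The plan is short: both conclusions follow immediately by combining Theorem~\ref{sturm1} with the ultrafilter characterizations of IP$^*$-sets and central$^*$-sets. First, since $u$ is a prefix of the nonsingular Sturmian word $\omega$ we have $0 \in \omega\big|_u$, so Theorem~\ref{sturm1} (applied with the factor $u$ and $n=0$) yields that $\omega\big|_u = \omega\big|_u - 0$ is an IP$^*$-set and a central$^*$-set. The same reasoning applied to $\omega'$ and $u'$ gives that $\omega'\big|_{u'}$ is also an IP$^*$-set and a central$^*$-set.

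Next I would invoke the fact that the collection of IP$^*$-sets (respectively central$^*$-sets) is closed under finite intersections. Indeed, by Theorem~\ref{Hind} a subset of $\nats$ is IP$^*$ if and only if it belongs to every idempotent ultrafilter $p \in \beta \nats$, and by Definition~\ref{Cen1} a subset is central$^*$ if and only if it belongs to every minimal idempotent. Since each such $p$ is a filter, and in particular closed under finite intersections, $\omega\big|_u$ and $\omega'\big|_{u'}$ both lying in $p$ forces their intersection to lie in $p$ as well. Thus $\omega\big|_u \cap \omega'\big|_{u'}$ belongs to every idempotent (respectively every minimal idempotent) ultrafilter, and so it is an IP$^*$-set (respectively central$^*$-set).

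For the infinitude assertion, I would use the standing convention in the paper that all idempotents of $\beta\nats$ under consideration are free; since free ultrafilters contain only infinite sets, any IP$^*$-set is infinite, and in particular so is $\omega\big|_u \cap \omega'\big|_{u'}$. (Equivalently, one may argue directly: given any finite $F \subset \nats$, picking a sequence $x_0 < x_1 < \cdots$ with $x_0 > \max F$ yields an IP-set disjoint from $F$, so $F$ cannot be an IP$^*$-set.) There is essentially no technical obstacle in this argument—the real content is Theorem~\ref{sturm1}; the remainder is a direct application of the general principle that being contained in every (minimal) idempotent is preserved under finite intersections.
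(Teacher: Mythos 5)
Your proof is correct and follows essentially the same route as the paper: the paper likewise fixes an arbitrary idempotent $p\in\beta\nats$, notes that $\omega\big|_u\in p$ and $\omega'\big|_{u'}\in p$ (via the nonsingular case, i.e.\ Lemma~\ref{fix}/Theorem~\ref{sturm1}), and concludes that the intersection lies in every idempotent, hence is IP$^*$ (resp.\ central$^*$). Your added justification of the infinitude claim is a harmless elaboration of what the paper leaves implicit.
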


\noindent We note that the assumption that $\omega$ and $\omega '$ be nonsingular is necessary, as for example
if we consider $\omega = 0\mathbf{f}$ and $\omega'=1\mathbf{f}$ with $\mathbf{f}$ the Fibonacci word,
then $\omega\big|_0 \cap \omega'\big|_{1}=\{0\}.$
\vspace{.1in}

 \begin{proof} Let $\omega$ and $\omega'$ be two nonsingular Sturmian words,  $u$ a prefix of $\omega,$  $u'$ a prefix of $\omega',$ and $p\in \beta \nats$ an idempotent ultrafilter. Then by Corollary~\ref{C1} we have that $\omega\big|_u \in p$ and $\omega\big|_{u'}\in p$ and hence $ \omega\big|_u \cap \omega\big|_{u'}\in p.$ Thus $ \omega\big|_u \cap \omega\big|_{u'}$ belongs to every idempotent and hence is an IP$^*$-set.
 \end{proof}

\noindent  We next consider singular Sturmian words.

 \begin{lemma}Let  $\omega, \omega'\in \Omega$ be distinct Sturmian words such that $T^{n_0}(\omega) = T^{n_0}(\omega')=\tilde \omega$ for some $n_0\geq 1.$ Then for every $u\in  \F$ and every non-principal ultrafilter $p\in \beta \nats$ we have
 \[\omega\big|_u \in p \Longleftrightarrow \omega'\big|_u \in p.\]
 In particular, $p^*(\omega)=p^*(\omega ').$
 \end{lemma}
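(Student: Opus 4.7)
The plan rests on a single elementary observation: the hypothesis $T^{n_0}(\omega) = T^{n_0}(\omega')$ means precisely that $\omega_{n_0+k} = \omega'_{n_0+k}$ for every $k \geq 0$. In other words, $\omega$ and $\omega'$ can differ only at positions $0, 1, \ldots, n_0 - 1$.

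Now fix a factor $u \in \mathcal{F}$ and consider $\omega\big|_u \triangle \omega'\big|_u$. If $n \geq n_0$, then all the indices $n, n+1, \ldots, n+|u|-1$ lie in the range where $\omega$ and $\omega'$ coincide, so $\omega_n \omega_{n+1} \ldots \omega_{n+|u|-1} = u$ if and only if $\omega'_n \omega'_{n+1} \ldots \omega'_{n+|u|-1} = u$. Hence
\[
\omega\big|_u \triangle \omega'\big|_u \subseteq \{0, 1, \ldots, n_0 - 1\},
\]
which is a finite set.

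The second ingredient is that any non-principal ultrafilter $p$ contains every cofinite subset of $\nats$ (equivalently, $p$ contains no finite set, so it contains the complement of every finite set). Therefore, whenever two subsets of $\nats$ have finite symmetric difference, they either both belong to $p$ or both fail to belong to $p$. Applying this to $\omega\big|_u$ and $\omega'\big|_u$ yields the desired equivalence $\omega\big|_u \in p \Longleftrightarrow \omega'\big|_u \in p$.

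For the final assertion, simply unwind Definition~\ref{p*}: a word $u \in \mathcal{A}^*$ is a prefix of $p^*(\omega)$ iff $\omega\big|_u \in p$, and likewise $u$ is a prefix of $p^*(\omega')$ iff $\omega'\big|_u \in p$. By what we just proved these conditions are equivalent for every $u$, so $p^*(\omega)$ and $p^*(\omega')$ have exactly the same prefixes, hence coincide. There is no real obstacle here — the entire content is that $T^{n_0}(\omega)=T^{n_0}(\omega')$ forces $\omega$ and $\omega'$ to agree cofinitely, and non-principal ultrafilters are blind to finite modifications.
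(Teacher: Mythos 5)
Your proof is correct and follows essentially the same route as the paper: the hypothesis forces $\omega$ and $\omega'$ to agree from position $n_0$ on, so $\omega\big|_u$ and $\omega'\big|_u$ have the same tail past $n_0$, and a non-principal ultrafilter cannot distinguish sets with finite symmetric difference. The paper phrases this via $\omega\big|_u \in p \Longleftrightarrow \omega\big|_u \cap [N,+\infty)\in p$ and the equality $\omega\big|_u \cap [n_0,+\infty)=\omega'\big|_u \cap [n_0,+\infty)$, which is the same argument in slightly different words.
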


 \begin{proof} Since $p$ is a non-principal ultrafilter, we have that
 $\omega \big|_u \in p \Longleftrightarrow \omega \big|_u \cap [N,+\infty) \in p$ for all $ N\geq 1.$ Similarly
 $\omega' \big|_u \in p \Longleftrightarrow \omega' \big|_u \cap [N,+\infty) \in p$ for all $ N\geq 1.$
 But for each $u\in \F,$ we have $\omega \big|_u \cap [n_0,+\infty) =\omega' \big|_u \cap [n_0,+\infty) .$ The result now follows.
 \end{proof}

 \begin{lemma}\label{choice} Let $\omega,\omega' \in \Omega$ be as in the previous lemma, and let $p\in \beta \nats$ be an idempotent ultrafilter. Then $p^*(\omega)=p^*(\omega') \in\{\omega, \omega'\}.$
 \end{lemma}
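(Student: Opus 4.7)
The plan is to leverage the previous lemma, which already gives $p^*(\omega)=p^*(\omega')$; call this common value $\nu$. The task reduces to showing $\nu\in\{\omega,\omega'\}$. The key ingredients are the idempotency of $p$ (via Lemma~\ref{idempotent}), the proximality dichotomy of Lemma~\ref{key}, the aperiodicity of $\omega$, and the counting result of Lemma~\ref{unique}.

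First I would establish that $\omega$ and $\nu$ are proximal. Since $\nu=p^*(\omega)$, Definition~\ref{p*} gives $\omega\big|_u\in p$ for every prefix $u$ of $\nu$. Since $p$ is idempotent, Lemma~\ref{idempotent} yields $p^*(\nu)=p^*(p^*(\omega))=\nu$, so $\nu\big|_u\in p$ as well. Therefore $\omega\big|_u\cap\nu\big|_u\in p$ is nonempty for every prefix $u$ of $\nu$, which means $\omega$ and $\nu$ share arbitrarily long common factors starting at the same positions. This is exactly proximality. The same argument applied to $\omega'$ shows that $\omega'$ and $\nu$ are proximal.

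Next I would apply Lemma~\ref{key} to each pair. Either $\nu=\omega$ (in which case we are done) or there exists $n\geq 1$ with $T^n(\omega)=T^n(\nu)=\tilde\omega$. Combined with the hypothesis $T^{n_0}(\omega)=\tilde\omega$, the aperiodicity of $\omega$ forces $n=n_0$: otherwise $T^n(\omega)=T^{n_0}(\omega)$ would make $\omega$ periodic with period $|n-n_0|$. So in the nontrivial branch, $T^{n_0}(\nu)=\tilde\omega$. Likewise, either $\nu=\omega'$ or $T^{n_0}(\nu)=\tilde\omega$ (using the same aperiodicity on $\omega'$).

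Finally, suppose for contradiction that $\nu\notin\{\omega,\omega'\}$. Then both branches force $T^{n_0}(\nu)=\tilde\omega$, so $\omega$, $\omega'$, and $\nu$ are three distinct elements of $\Omega$ all mapping to $\tilde\omega$ under $T^{n_0}$. This contradicts Lemma~\ref{unique}, which asserts that at most two such preimages exist. Hence $\nu\in\{\omega,\omega'\}$, completing the proof. The only step requiring any thought is the proximality deduction from idempotency, but it is already implicit in the proof of Theorem~\ref{IPC}; the rest is bookkeeping with the previously established structural lemmas.
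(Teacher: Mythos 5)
Your proof is correct, but it follows a genuinely different route from the paper's. The paper argues purely algebraically: since $p^*$ commutes with the shift (Lemma~\ref{commutes}), one computes $T^{n_0}p^*(\omega)=p^*(T^{n_0}\omega)=p^*(\tilde\omega)=\tilde\omega$, the last step by Lemma~\ref{fix} because the characteristic word is nonsingular, and then Lemma~\ref{unique} applied to $\omega''=p^*(\omega)$ immediately gives $p^*(\omega)\in\{\omega,\omega'\}$. You instead re-derive from idempotency that $\omega$ and $\nu=p^*(\omega)$ are proximal (the same argument as in the proof of Theorem~\ref{IPC}), invoke the dichotomy of Lemma~\ref{key} to rule out the ``always eventually different'' branch, use aperiodicity to force the shift exponent to equal $n_0$, and only then conclude with Lemma~\ref{unique}. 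Both arguments ultimately rest on Lemma~\ref{unique} and on aperiodicity (the paper hides the latter inside the fact that $\tilde\omega$ is nonsingular, you use it explicitly to identify $n=n_0$), so neither is more general; the paper's computation is shorter and avoids touching proximality, while yours is more dynamical in flavor and makes visible why the conclusion holds (the limit word must be proximal to $\omega$, hence singular over the same $n_0$). One small point worth stating explicitly in your write-up: to apply Lemmas~\ref{key} and~\ref{unique} you need $\nu\in\Omega$, which holds because $p^*(\omega)=p$-$\lim_n T^n(\omega)$ lies in the shift orbit closure of $\omega$; the paper glosses over this too, but since your argument leans on lemmas stated for elements of $\Omega$, it deserves a sentence.
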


 \begin{proof} That $p^*(\omega)=p^*(\omega')$ follows from the previous lemma and the fact that idempotent ultrafilters are non-principal (see for instance \cite{VB2}). By Lemma~\ref{commutes},  $p^*$ commutes with the shift map $T,$ and hence
 \[T^{n_0}p^*(\omega)=p^*(T^{n_0}\omega)=p^*(\tilde \omega)=\tilde \omega\] where the last equality follows from Lemma~\ref{fix}. By Lemma~\ref{unique} applied to $\omega''=p^*(\omega)$ it follows that $p^*(\omega)=\omega$ or $p^*(\omega)=\omega'.$
 \end{proof}

 \begin{proof}[Proof of Theorem~\ref{sturm2}]Let $\omega \in \Omega $ and $n_0$ be as in the statement of the theorem.  Then there exists a unique $\omega'\in \Omega$ with $\omega'\neq \omega$ and with $T^{n_0}(\omega')=\tilde \omega.$ Suppose that $\omega\big|_u$ is an IP-set for some  $u\in \F.$ Then
 by Lemma~\ref{IP} it follows that $u$ is a prefix of $p^*(\omega)$ for some idempotent ultrafilter $p\in \beta \nats.$   It follows from Lemma~\ref{choice} that $u$ is a prefix of $\omega$ or a prefix of $\omega'.$ This proves one direction.

 To establish the other direction, we must show that $\omega \big|_u$ is a central set for each prefix $u$ of $\omega$ or of  $\omega'.$ By Theorem~\ref{Berg}, there exist minimal idempotent ultrafilters $p_1,p_2\in \beta \nats$ such that
 $p_1^*(\omega)=\omega$ and $p_2^*(\omega)=\omega'.$  The result now follows.
 \end{proof}

 \begin{remark}\rm{V. Bergelson \cite{BergCom} suggested to us that the above result may be related to a previously known partition of $\mathbb{N}$ into
two  central sets $X = \{ [mx], m \in \mathbb{N}\}$ and $Y = \{[my], m
\in \mathbb{N}\}$, where $x$ and $y$ are two irrational numbers
satisfying $1/ x + 1/y = 1.$  In fact,  this partition precisely
corresponds to our partition of $\mathbb{N}$ into two IP-sets
$\omega\big|_0$ and $\omega\big|_1$ where $\omega$ is of the form
$0\tilde \omega$ and $\tilde \omega$ is a characteristic Sturmian.

This could be seen using the definition of Sturmian words via
mechanical words (see Section 2 for notation).  For a slope
$\alpha$ we have $s_{\alpha, 0} = 0 \tilde \omega$. Let
$\alpha=1/x$ and $1/y= 1-\alpha;$ then $s_{\alpha, 0}(n)=1$ if and
only if there exists an integer $k$ such that $\alpha (n+1) \geq
k$ and $ \alpha n <k$. It is easy to see that this pair of
equations is equivalent to $ n < kx \leq n+1$, which implies $n
\in X$. We have $s_{\alpha, 0}(n)=0$ if and only if there exists
an integer $k$ such that $\alpha (n+1) < k+1$ and $ \alpha n \geq
k$. It is not difficult to see that this pair of equations is
equivalent to $ n \leq (n-k) y < n+1$, which implies $n \in Y$.}
\end{remark}

\begin{remark}\rm{We do not know if the above results on Sturmian partitions extend to the broader class of Arnoux-Rauzy words. In fact, our proof of Lemma~\ref{key} relies on the geometric interpretation of Sturmian words as codings of orbits under an irrational rotation on the circle. It was shown in \cite{CFZ} that there exist Arnoux-Rauzy words which are not measure-theoretically conjugate to a rotation on the $n$-torus. In this case, we do not understand  which pairs of Arnoux-Rauzy words in the subshift are proximal.}
\end{remark}

\section{Proofs of Theorems~\ref{T3} \& \ref{T4}}

We begin by briefly reviewing some notions from topological dynamics. By a {\it topological flow} we mean a pair $(X,f)$ consisting of a compact set $X$ together with a homeomorphism $f$ of $X.$ In our framework we will consider $X$ to be a set consisting of bi-infinite words on a finite alphabet and $f$ the shift map.
A topological flow $(X,f)$  is said to be {\it equicontinuous} if for every $\epsilon >0,$ there exists a $\delta >0,$ such that for all $x,y\in X,$ if $d(x,y)<\delta$ then $d(f^n(x),f^n(y))<\epsilon$ for every $n\in \ints.$
A topological flow $(Y,g)$ is called a {\it factor} of $(X,f)$ if there exists a continuous surjection
\[\pi : X \rightarrow Y\]
such that $\pi \circ f=g\circ \pi.$  It is well known (for instance by way of Zorn's lemma) that every topological flow $(X,f)$ has a {\it maximal equicontinuous factor} $(Y,g)$ i.e., $(Y,g)$ is an equicontinuous factor of $(X,f)$ and any equicontinuous factor $(Z,h)$ of $(X,f)$ is also a factor of $(Y,g).$
It is also well known that if $\pi: X\rightarrow Y$ is the maximal equicontinuous factor, then for any two points $x,y\in X$ we have that $\pi(x)=\pi(y)$ if and only if $x$ and $y$ are regionally proximal (see \cite{A} ).

\begin{proof}[Proof of Theorem~\ref{T3}] Let  us fix positive integers $r$ and $N.$ Consider the constant length substitution \[\tau:\{1,2,\ldots, r\}\rightarrow \{1,2,\ldots ,r\}^+\] given by $1\mapsto 123\cdots r, $ $2\mapsto 23\cdots r1,$  $3\mapsto 34\cdots r12,$ $\ldots ,$  $r\mapsto r12\cdots r-1.$ In case $r=2$ we have the Thue-Morse substitution on the alphabet $\{1,2\}.$  For $1\leq i\leq r,$ let $x^{(i)}$ denote the $i$th fixed point of $\tau$ beginning in the letter $i.$ As in the case of Thue-Morse, for $i\neq j$ the words $x^{(i)}$ and $x^{(j)}$ never coincide, i.e., $x^{(i)}_n\neq x^{(j)}_n$ for each $n\in \nats.$  Let $(X,T)$ denote the one-sided minimal subshift generated by the primitive substitution $\tau.$  We will now show that each of the fixed points $x^{(i)}$ is distal.

\begin{lemma} Let $x$ denote any one of the fixed points $x^{(i)}$ of the substitution $\tau$ above. Then $x$ is distal. In particular, the two fixed points of the Thue-Morse substitution are each distal.
\end{lemma}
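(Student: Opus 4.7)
The plan is to exploit the fact that $\tau$ is a primitive substitution of constant length $r$ on an alphabet of size $r$ with the feature that, for every letter $a$, the word $\tau(a)$ begins with $a$. This guarantees both that $\tau$ has exactly $r$ fixed points $x^{(1)},\ldots,x^{(r)}$ and, crucially, that $\tau$ is \emph{recognizable}: every $y\in X$ admits a unique factorization $y=T^{k}\tau(y')$ with $y'\in X$ and $0\le k<r$.

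Using recognizability, the first step is to construct a continuous factor map $\pi\colon X\to\Z_r$ onto the $r$-adic odometer. Iterated desubstitution produces for each $y\in X$ a sequence $(k_1,k_2,k_3,\ldots)\in\{0,\ldots,r-1\}^{\nats}$, and we declare this to be $\pi(y)$. One verifies that $\pi$ intertwines $T$ with the ``add $1$'' map on $\Z_r$, so $(\Z_r,+1)$ is an equicontinuous factor of $(X,T)$. Since equicontinuous flows are distal and every factor map sends proximal pairs to proximal pairs, any $y\in X$ proximal to $x^{(i)}$ must satisfy $\pi(y)=\pi(x^{(i)})$.

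The second step is to identify the fiber $\pi^{-1}(0)$. Since $x^{(i)}=\tau(x^{(i)})$ we have $\pi(x^{(i)})=0$ for every $i$. Conversely, if $\pi(y)=0$ then at each level $m$ we can write $y=\tau^{m}(y_m)$ with $y_m=\tau(y_{m+1})\in X$. Because $\tau(a)$ always begins with $a$, the first letter of $y_m$ agrees with the first letter of $y_{m+1}$, so $y_m[0]$ is a single letter $j\in\{1,\ldots,r\}$ independent of $m$. Hence $y$ begins with $\tau^{m}(j)$ for every $m$, and since $\tau^{m}(j)\to x^{(j)}$ in $\{1,\ldots,r\}^{\nats}$ we conclude $y=x^{(j)}$. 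Therefore $\pi^{-1}(0)=\{x^{(1)},\ldots,x^{(r)}\}$.

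Combining the two steps, any $y$ proximal to $x^{(i)}$ lies in $\{x^{(1)},\ldots,x^{(r)}\}$, and the hypothesis $x^{(i)}_n\neq x^{(j)}_n$ for all $n$ whenever $i\neq j$ prevents any two distinct fixed points from being proximal. Therefore $y=x^{(i)}$ and $x^{(i)}$ is distal. The main delicate point in this plan is establishing the recognizability of $\tau$, which makes $\pi$ well defined and continuous; this is classical for primitive substitutions of constant length, but in the present case it can also be checked by hand, using the arithmetic-progression form $\tau(a)=a(a+1)\cdots(a+r-1)\bmod r$ to locate block boundaries uniquely.
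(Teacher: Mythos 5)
Your argument is correct, but it takes a genuinely different route from the paper's. You build an explicit equicontinuous factor: using recognizability of the constant-length substitution you desubstitute repeatedly, obtain a continuous factor map $\pi$ onto the $r$-adic odometer, observe that factor maps carry proximal pairs to proximal pairs and that equicontinuous systems are distal, and then identify the fiber $\pi^{-1}(0)$ as exactly the $r$ fixed points; since distinct fixed points disagree in every coordinate, no two of them are proximal, so each $x^{(i)}$ is distal. The paper instead passes to the two-sided subshift, invokes the Baker--Barge--Kwapisz theorem that for a primitive substitution of Pisot type the map onto the maximal equicontinuous factor is finite-to-one, and then applies $\sigma=\tau^r$ to a bi-infinite extension $y'\cdot y$ of a point $y$ proximal to $x$: the iterates give infinitely many points proximal to $z=x_{\mathrm{rev}}\cdot x$ unless they stabilize, forcing $y$ to be a fixed point. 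Your approach is more elementary and self-contained (no tiling-space machinery, no two-sided detour, and it pins down the zero fiber exactly), at the price of needing one-sided recognizability; the paper's approach is heavier but applies to Pisot-type substitutions beyond constant length. One caution about your last sentence: block boundaries are \emph{not} always locally visible from the arithmetic-progression form, because across a junction of two identical blocks $\tau(a)\tau(a)$ the letters keep increasing by $1$ modulo $r$, so no local window detects the cut. The by-hand verification should instead be global: if a point of $X$ admitted valid parsings in two different phases modulo $r$, then at every position at least one parsing forces $y_{j+1}=y_j+1 \pmod r$, so $y$ would be the periodic word $(12\cdots r)^\infty$, which does not lie in the aperiodic minimal subshift; hence the parsing phase, and then the desubstituted word, are unique, and $\pi$ is well defined and continuous (the sets $T^k\tau(X)$, $0\le k<r$, are closed and disjoint, hence clopen in $X$). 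With that adjustment your plan goes through.
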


\begin{proof} Let $(\tilde X,T)$ denote the two-sided subshift generated by $\tau,$ and let
$\pi :\tilde X\rightarrow Y$ denote the maximal equicontinuous factor.  The substitution $\tau$ above is
of  Pisot type, in fact, the dilation of $\tau$ is $r$ and all other eigenvalues are equal to $0.$ (Note that $\tau$ is not an irreducible substitution). It is proved in \cite{BBK} that, for a primitive substitution of Pisot type (irreducible or not), the mapping
onto the maximal equicontinuous factor is finite to one.\footnote{The authors study the maximal equicontinuous factor of
$1$-dimensional substitutive real tiling spaces. To apply their finiteness result (Theorem 4.2 in \cite {BBK}), we use the fact that in our setting all the tiles have the same length, and hence proximality of points in $X$ with respect to the shift map $T$ implies proximality of the corresponding tilings under the $\reals-$action.}  Thus there exists a constant $C$ such that
for any $z\in \tilde X,$ there are at most $C$ points $z'\in \tilde X$ which are regionally proximal to $z$ In particular,  for any $z\in \tilde X,$ there are at most $C$ points $z'\in \tilde X$ which are proximal to $z.$

Now suppose $y\in X$ is proximal to $x. $ We will show that $y=x.$ It is easy to see that the bi-infinite word $z=x_{\mbox{rev}}\cdot x\in \tilde X$ where $x_{\mbox{rev}}$ denotes the reversal or mirror image of $x,$ and where  $\cdot$ denotes the origin. Similarly, let $y'$ denote a left infinite word such that the concatenation
 $z'=y'\cdot y\in \tilde X.$   Since $x$ and $y$ are proximal, it follows that $z$ and $z'$ are proximal.
Set $\sigma =\tau^r.$  Since $\tau, $ and hence $\sigma ,$ are of constant length,
it follows that $\sigma (z')$ is proximal to $\sigma(z).$ But  $\sigma(z)=z.$ Hence  $(\sigma ^n(z'))_{n\geq 0}$ defines an infinite sequence of points in $\tilde X$ each of which is proximal to $z,$ and which in the limit  tends to $x^{(i)}_{\mbox{rev}}\cdot x^{(j)}$ where $i$ is the first (meaning rightmost) letter of $y'$ and $j$ is the first letter of $y.$ But since there are only finitely many points in $\tilde X$ which are proximal to $z$ it follows that $\sigma ^n(z')= x^{(i)}_{\mbox{rev}}\cdot x^{(j)}$ for some $n\geq 0.$ Hence by de-substituting we obtain  $z'=x^{(i)}_{\mbox{rev}}\cdot x^{(j)}$ from which it follows that $y=x^{(j)}.$ Thus both $x$ and $y$ are fixed points of $\tau$ which are assumed proximal. It follows that $y=x$ and hence $x$ is distal as required.
\end{proof}

Put $x=x^{(1)}.$ Since $x$ is distal,  so is $T^n(x)$ for each $n\geq 1.$  On the other hand, it is easy to see that for each positive integer $n$  we have  $u^{(i)}[n]x\in X,$ where $u^{(i)}[n]$ denotes the reversal of the prefix of $x^{(i)}$ of length $n.$  Thus the $r$ words $\{u^{(1)}[n]x, u^{(2)}[n]x, \ldots, u^{(r)}[n]x\}$ are pairwise proximal and each begin in distinct letters (this is because the fixed points never coincide).
Finally let $\omega =  u^{(1)}[N+1]x,$ and set $A_i=\omega \big|_i$ for each $1\leq i\leq r.$ Then each $A_i$ is a central set. For each $1\leq n\leq N,$ we have that $A_i-n=T^n(\omega)\big|_i=u^{(1)}[N+1-n]x\big|_i$ is a central set. But  for $k\geq 1,$ we have that $A_i-(N+k)=T^{k-1}(x)\big|_i$ which is a central set if and only if $T^{k-1}(x)$ begins in $i.$

\end{proof}

\begin{proof}[Proof of Theorem~\ref{T4}] Fix a positive integer $r.$ Let $\tau$ be a primitive substitution whose associated subshift $\Omega$ is topologically weak mixing. For instance we may take the substitution $0\mapsto 001$ and $1\mapsto 11001$ or $0\mapsto 001$ and $1\mapsto 11100$ (see \cite{DK}).
Let $\omega \in \Omega.$ Fix $m$ such that $\p_{\omega}(m)\geq r,$ and put $s=\p_{\omega}(m).$
Let $u_1,u_2,\ldots ,u_s$ denote the factors of $\omega$ of length $m.$
As pointed out to us by V. Bergelson and Y. Son \cite{BergCom},  the weak mixing implies that the set of points in $\Omega$ proximal to $\omega$ is dense in $\Omega$
(see for instance page 184 of \cite{F}).
Thus for each factor $u_i$ there exists a word $x_i\in \Omega$ beginning in $u_i$ and which is proximal to $\omega.$ Hence by Theorem~\ref{Berg} there exists a minimal  idempotent ultrafilter $p_i\in \beta \nats$ such that $p_i^*(\omega)=x_i.$ Hence for each $1\leq i\leq s$ we have that $\omega \big|_{u_i}\in p_i$ and hence  $\omega \big|_{u_i}$  is a central set. Finally, for each positive integer $n$ and for each $1\leq i\leq s$ we have that
\[ \omega \big|_{u_i}-n=T^n(\omega)\big|_{u_i}.\]
Again the weak mixing implies that there exists a word $x\in \Omega$ beginning in $u_i$ and proximal to $T^n(\omega).$ Hence there exists a minimal idempotent $p\in \beta \nats$ such that $p^*(T^n(\omega))=x$
from which it follows that $\omega \big|_{u_i}-n\in p$ and hence  $\omega \big|_{u_i}-n$ is a central set.
Thus we obtain a partition of $\nats$
\[\nats = \bigcup_{i=1}^s \omega \big|_{u_i}\]
into $s$-many central sets and for each positive integer $n$ and $1\leq i\leq s$ we have that $\omega \big|_{u_i}-n$ is again a central set.
 Thus, setting
\[A_i= \omega \big|_{u_i}\] for $i=1,\ldots, r-1,$ and \[A_r=\bigcup_{i=r-1}^s \omega \big|_{u_i}\]
we obtain the desired partition of $\nats.$
\end{proof}

 \section{Infinite  central partitions of $\nats$}

In this section we construct infinite partitions of $\nats$ into central sets by using words on an infinite alphabet.
Our construction makes use of the notion of \emph{iterated palindromic closure operator} (first introduced in \cite{deLuca}):

\begin{definition}\label{ipc}
The iterated palindromic operator $\psi$ is defined inductively as follows:
\begin{itemize}
\item $\psi (\varepsilon) = \varepsilon$,
\item For any word $w$ and any letter $a$, $\psi (wa) = (\psi(w)a)^{(+)}$.
\end{itemize}
We denote with $w^{(+)}$ the \emph{right palindromic closure} of the word $w$, i.e., the shortest palindrome which has $w$ as a prefix.
\end{definition}

For example, $\psi (aaba)=aabaaabaa.$
The operator $\psi$ has been extensively studied for its central role in constructing standard Sturmian and episturmian words. It follows immediately from the definition that if $u$ is a prefix of $v,$ then $\psi(u)$ is a prefix of $\psi(v).$ Thus, given an infinite word $\omega =\omega_0\omega_1\omega_2\ldots  $ on the alphabet $A$ we can define
\[\psi(\omega)=\lim_{n\rightarrow \infty} \psi(\omega_0\omega_1\omega_2\ldots  \omega_n).\]
The following lemma summarizes the properties of $\psi$ needed.

\begin{lemma}\label{lemmapsi}
Let $\Delta$ be a right infinite word over the (finite or infinite) alphabet $A$ and let $\omega = \psi(\Delta)$. Then the following statements hold:
\begin{enumerate}
\item The word $\omega$ is closed under reversal, i.e., if $v=v_1v_2\ldots v_k $ is a factor of $\omega$, then so is its mirror image $v_k\ldots v_2v_1.$
\item \label{cond2} The word $\omega$ is uniformly recurrent.
\item \label{cond3} If each letter $a\in A$ appears in $\Delta$ an infinite number of times, then for each prefix $u$ of $\omega$ and each $a\in A,$ we have $au$ is a factor of $\omega.$
\end{enumerate}
\end{lemma}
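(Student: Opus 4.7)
Throughout, I would work with the chain of palindromic prefixes $P_n := \psi(\delta_1\delta_2\cdots\delta_n)$, where $\Delta = \delta_1\delta_2\cdots$, so that $P_{n+1} = (P_n\delta_{n+1})^{(+)}$ and $\omega$ is the common limit (in the prefix order) of the $P_n$. For assertion (1), a direct induction on $n$ using the definition of right palindromic closure shows that each $P_n$ is a palindrome: $\psi(\varepsilon) = \varepsilon$ is a palindrome, and the palindromic closure of any word is a palindrome by construction. Any factor $v$ of $\omega$ lies inside some $P_N$, so its reversal also sits inside the palindrome $P_N$ and hence inside $\omega$.

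For assertion (2), the structural fact is that, since $P_n$ is a palindromic prefix of the palindrome $P_{n+1}$, it is automatically also a suffix of $P_{n+1}$. I would prove by induction on $m \geq n+1$ that any two consecutive occurrences of $P_n$ inside $P_m$ are separated by at most $|P_n|+1$ positions. The base case $m=n+1$ follows from the general estimate $|P_{n+1}|-|P_n|\leq |P_n|+1$. For the inductive step, I would split on whether $\delta_{m+1}$ already appears in $P_m$: if it does not, then $P_{m+1}=P_m\,\delta_{m+1}\,P_m$ and the only new gap to control, namely the one from the last occurrence of $P_n$ in the left copy of $P_m$ (which starts at position $|P_m|-|P_n|$) to the first occurrence in the right copy (which starts at position $|P_m|+1$), equals exactly $|P_n|+1$; if $\delta_{m+1}$ already occurs in $P_m$, then the two copies of $P_m$ inside $P_{m+1}$ properly overlap, so every gap of $P_n$ in $P_{m+1}$ is already realized inside a single copy of $P_m$ and the bound propagates. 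Any factor $u$ of $\omega$ is a factor of some $P_N$, so this uniform gap bound for the occurrences of $P_N$ in $\omega$ yields a uniform gap bound for the occurrences of $u$, giving uniform recurrence.

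For assertion (3), given a prefix $u$ of $\omega$ and a letter $a \in A$ occurring infinitely often in $\Delta$, I would choose $n$ large enough so that $u$ is a prefix of $P_n$ and $\delta_{n+1}=a$. Then $P_n a$ is a prefix of the palindrome $P_{n+1}=(P_n a)^{(+)}$, so its reversal $aP_n$ (using (1) to identify $P_n$ with its mirror image) is a suffix of $P_{n+1}$. Since $u$ is a prefix of $P_n$, the word $au$ is a prefix of $aP_n$, and is therefore a factor of $P_{n+1}$, hence of $\omega$.

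The only real obstacle is the gap bound in assertion (2); the ``new letter versus old letter'' case split is where the specific structure of the iterated palindromic closure is genuinely needed, and the value $|P_n|+1$ has to be tracked carefully since it is attained in the new-letter case and leaves no slack in the induction. Parts (1) and (3) are essentially formal consequences of palindromicity of the $P_n$ together with the recursive definition of $\psi$.
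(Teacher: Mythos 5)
Your argument is essentially the paper's: all three parts rest on the palindromic prefixes $P_n=\psi(\delta_1\cdots\delta_n)$, the fact that $P_n$ is both a prefix and a suffix of $P_{n+1}$, and the length estimate $|P_{n+1}|\le 2|P_n|+1$; the paper states the conclusion of (2) as ``every factor of length $3|\psi(v)|$ contains an occurrence of $\psi(v)$'' instead of your explicit gap bound $|P_n|+1$, and for (3) it produces $\psi(v)a$ as a factor and then applies closure under reversal, which is the mirror image of your suffix argument. The one step to flag is the case split in your proof of (2). In the old-letter case you assert that the prefix and suffix copies of $P_m$ inside $P_{m+1}$ properly overlap, i.e.\ $|P_{m+1}|\le 2|P_m|$. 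That is a genuine theorem about iterated palindromic closure (a consequence of the de Luca--Justin formula for $\psi(va)$ when $a$ occurs in $v$), not a formal consequence of palindromicity: for an arbitrary palindrome $P$ containing the letter $a$, the longest palindromic suffix of $Pa$ can be the single letter $a$ (take $P=cbbaabbc$, so that $(Pa)^{(+)}=PaP$ has length $2|P|+1$), so as written this step is unjustified. Fortunately you do not need it: the crude bound $|P_{m+1}|\le 2|P_m|+1$, valid in both cases since the longest palindromic suffix of $P_m\delta_{m+1}$ has length at least $1$, already gives the junction gap $\bigl(|P_{m+1}|-|P_m|\bigr)-\bigl(|P_m|-|P_n|\bigr)\le |P_n|+1$, so one uniform computation replaces the dichotomy and lands you exactly on the paper's argument. (Relatedly, in the overlap case a straddling gap need not literally be ``realized inside a single copy'' of $P_m$; what your induction actually uses is only that every gap in the undercounted set of occurrences is at most $|P_n|+1$.) Parts (1) and (3) are correct as written, and your derivation of the new-letter formula $P_{m+1}=P_m\delta_{m+1}P_m$ is a one-liner worth including if you keep that case.
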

\begin{proof}
Since any factor of $\omega$ is contained in some $\psi(v)$ for a sufficiently long prefix $v$ of $\Delta$, and $\psi(v)$ is by definition a palindrome (and hence closed under reversal), the first statement is proved. The second statement is easily derived from the fact that for any finite prefix $va$ of $\Delta$ ($a$ being a letter), we have that $|\psi(va)| \leq 2|\psi(v)|+1$ and moreover $\psi(va)$ begins and ends in $\psi(v).$ It follows that any factor of length (for example) $3|\psi(v)|$ contains an occurrence of $\psi(v)$.

Finally suppose each $a\in A$ appears infinitely many times in $\Delta.$ Thus for any letter $a$ and any prefix $v$ of $\Delta$ there exists a prefix of $\Delta$ of the form $vv'a$. From the definition of $\psi$ we then have that $\psi(vv')a$ is a prefix of $\omega$ and $\psi(vv')$ ends in $\psi(v)$, so $\psi(v)a$ is a factor of $\omega$. Since $\psi(v)$ is a palindrome and $\omega$ is closed under reversal, we obtain that for any prefix $v$ of $\Delta$ and for any letter $a$, the word $a\psi(v)$ is a factor of $\omega$ and the third statement easily follows.
\end{proof}

With the preceding Lemma, we are now able to construct infinite partitions of $\nats$ such that each element of the partition is an IP-set.

\begin{proposition}\label{wbonacci}
Let $\omega=\psi (\Delta)$ where $\Delta$ is a right infinite word on an infinite alphabet $\mathcal{A}$ with the property that each letter $a\in \mathcal{A}$ occurs in $\Delta$ an infinite number of times. Then, for any $a \in  \mathcal{A}$, the set $a\omega\big|_{a}$ is a central set, thus $\{ \omega\big|_{a}+1 \}_{a \in \mathcal{A}}$ is an infinite partition of $\nats-\{0\}$ into central sets\footnote{This is a special case of a more general result of Hindman, Leader and Strauss \cite{HLS}  in which they show that every central set in $\nats$  is a countable union of pairwise disjoint central sets.}.
\end{proposition}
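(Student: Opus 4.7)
The plan is to use Lemma~\ref{IP}: it suffices to exhibit a minimal idempotent $p\in\beta\nats$ for which $a$ is a prefix of $p^*(a\omega)$. The natural choice is an idempotent that fixes $a\omega$ itself; such an idempotent is handed to us by Theorem~\ref{Berg} applied with $x=y=a\omega$, provided $a\omega$ is uniformly recurrent inside a \emph{compact} dynamical system. The main obstacle is that $\mathcal{A}$ is infinite and $\mathcal{A}^\nats$ is not compact, so Theorem~\ref{Berg} does not apply directly in the ambient space. I will get around this by passing to a finite-alphabet projection.

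Fix $a\in\mathcal{A}$, let $\pi_a:\mathcal{A}^\nats\to\{a,\bar a\}^\nats$ be the letter-to-letter map fixing $a$ and collapsing every other letter to a single symbol $\bar a$, and set $\omega':=\pi_a(a\omega)$. The first substantive step is to check that $\omega'$ is uniformly recurrent in the compact space $\{a,\bar a\}^\nats$. Part (3) of Lemma~\ref{lemmapsi} says that every prefix of $a\omega$ (necessarily of the form $au$ with $u$ a prefix of $\omega$) is itself a factor of $\omega$; combined with part (2), it follows that $a\omega$ has exactly the same factors as $\omega$ and is uniformly recurrent in $\mathcal{A}^\nats$. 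Uniform recurrence then transfers to any letter-to-letter projection: given a factor $u$ of $\omega'$, choose any factor $v$ of $a\omega$ with $\pi_a(v)=u$; then $a\omega\big|_v\subseteq\omega'\big|_u$ and the former is syndetic, so $\omega'\big|_u$ is syndetic.

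Now $\omega'$ is a uniformly recurrent point of the compact minimal subshift it generates, and trivially proximal to itself. Theorem~\ref{Berg} applied with $x=y=\omega'$ thus produces a minimal idempotent $p\in\beta\nats$ with $p^*(\omega')=\omega'$. Since $\omega'_0=\pi_a(a)=a$, the letter $a$ is a prefix of $p^*(\omega')$, and Lemma~\ref{IP} yields that $\omega'\big|_a$ is central. Finally, because $\pi_a^{-1}(a)=\{a\}$, we have the set-theoretic equality $\omega'\big|_a=a\omega\big|_a$, and hence $a\omega\big|_a$ is central.

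The partition statement then follows: $a\omega\big|_a=\{0\}\cup(\omega\big|_a+1)$, and a non-principal ultrafilter (such as a minimal idempotent) contains a set if and only if it contains that set with $0$ removed, so each $\omega\big|_a+1$ is itself central. That these sets partition $\nats-\{0\}$ is immediate, since $\omega$ assigns to each position exactly one letter of $\mathcal{A}$. I expect the only slightly delicate step to be the verification that $\omega'$ is uniformly recurrent; once the reduction to a compact binary subshift is in place, the rest is a direct appeal to the machinery already developed in the paper.
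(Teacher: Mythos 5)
Your proposal is correct and follows essentially the same route as the paper: the paper likewise reduces to the binary word $0\pi_a$ (your $\omega'$) via the letter-to-letter collapse, uses Lemma~\ref{lemmapsi} to get uniform recurrence of $a\omega$ and hence of its projection, and applies Theorem~\ref{Berg} with the point proximal to itself to obtain a minimal idempotent fixing it, whence Lemma~\ref{IP} gives centrality of $a\omega\big|_a=0\pi_a\big|_0$. Your final step passing from $a\omega\big|_a$ to $\omega\big|_a+1$ via non-principality of minimal idempotents is exactly the (tersely stated) conclusion of the paper's proof.
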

\begin{proof}
From \ref{lemmapsi} we have that $\omega$ is uniformly recurrent and closed under reversal. Furthermore, since each $a\in  \mathcal{A}$ occurs in $\Delta$ an infinite number of times,   (\ref{cond3}) of  \ref{lemmapsi} implies that the set of factors of $a\omega$ coincides with that of $\omega.$
It follows therefore that $a\omega$ is uniformly recurrent as well.
Let us denote by $\pi_{a}$ the image of $\omega$ under the morphism $\mu_{a}$ defined as follows:
\begin{itemize}
\item $\mu_{a} (a) = 0$,
\item $\mu_{a} (x) = 1$ if $x \neq a$.
\end{itemize}
Since $a\omega$ is uniformly recurrent for any $a$, it is clear that also $0\pi_{a}$ is uniformly recurrent for any $a$.
From Theorem~\ref{Berg}, we then have that for any $a$ there exists a minimal idempotent ultrafilter $p_{a}$ such that $p_{a}^{*}(0\pi_{a})=0\pi_{a}$. In particular, this means, by Lemma \ref{IP}, that $0\pi_{a}\big|_{0}$ (which clearly coincides with $a\omega\big|_{a}$ by definition) is a central set for any $a$. The statement can then be easily derived from the fact that $a\omega\big|_{a}-1=\omega\big|_{a}.$
\end{proof}


\begin{thebibliography}{50}

\bibitem{ArRa}
P. Arnoux and G. Rauzy,
\newblock {\it Repr\'esentation g\'eom\'etrique de suites de complexit\'e $2n+1$,}
\newblock  Bull. Soc. Math. France {\bf 119} (1991),
199--215.

\bibitem{A}
J. Auslander,
\newblock{\it Minimal flows and their extensions,}
\newblock North-Holland Mathematical Studies, vol 153, North-Holland 1988.

\bibitem{BBK}
V. Baker, M. Barge and J. Kwapisz,
\newblock {\it Geometric realization and coincidence for reducible non-unimodular Pisot tiling spaces with an application to $\beta$-shifts,}
\newblock Num\'eration, pavages, substitutions,
\newblock  Ann. Inst. Fourier (Grenoble) {\bf 56} No. 7 (2006), p. 2213--2248.



\bibitem{VB2}
V. Bergelson,
\newblock  {\it Minimal idempotents and ergodic Ramsey theory,}
\newblock Topics in dynamics and ergodic theory, 8Ð 39,
\newblock  London Math. Soc. Lecture Note Ser., {\bf 310},
\newblock Cambridge Univ. Press, Cambridge, 2003.

\bibitem{BH}
V. Bergelson and  N. Hindman
\newblock{Nonmetrizable topological dynamics and Ramsey Theory}
\newblock Trans. Amer. Math. Soc. {\bf 320} (1990), p. 293--320.


\bibitem{BHS}
V. Bergelson, N. Hindman and D. Strauss,
\newblock  {\it Strongly central sets and sets of polynomial returns mod $1,$}
\newblock Proc. Amer. Math. Soc.,
\newblock to appear.

\bibitem{BergCom}
V. Bergelson and Y. Son
\newblock Personal communication.

\bibitem{Bl}
A. Blass,
\newblock {\it Ultrafilters: where topological dynamics = algebra = combinatorics,}
\newblock Topology Proc. {\bf 18} (1993), p. 33--56.

\bibitem{BHPZ}
M. Bucci, N. Hindman, S. Puzynina and L.Q. Zamboni
\newblock {\it On additive properties of sets defined by the Thue-Morse word}
\newblock preprint 2012.


\bibitem{CFZ}
J. Cassaigne, S. Ferenczi, and L.Q. Zamboni,
\newblock {\it Imbalances in {A}rnoux-{R}auzy sequences,}
\newblock  Ann. Inst. Fourier (Grenoble), {\bf 50} (2000), no. 4 p. 1265--1276.


\bibitem{DHS}
D. De, N. Hindman and D. Strauss,
\newblock {\it A New and Stronger Central Sets Theorem,}
\newblock Fundamenta Mathematicae {\bf 199} (2008), p. 155--175.


\bibitem{Dek}
F.M. Dekking
\newblock {\it The spectrum of dynamical systems arising from substitutions of constant length,}
\newblock Z. Wahrscheinlichkeitstheorie und Verw. Gebiete, {\bf 41} (1977/1978), p. 221--239.


\bibitem{DK}
F.M. Dekking and M. Keane
\newblock {\it Mixing properties of substitutions,}
\newblock Z. Wahrscheinlichkeitstheorie und Verw. Gebiete, {\bf 42} (1978), p. 23--33.

\bibitem{deLuca}
A. de Luca,
\newblock {\it Sturmian words: structure, combinatorics, and their arithmetics,}
\newblock Theoret. Comput. Sci. {\bf 183} (1997), p. 45--82.

\bibitem{DT1}
J.-M. Dumont and A. Thomas,
\newblock  {\it Syst\`emes de num\'eration et fonctions fractales relatifs
aux substitutions,}
\newblock Theoret. Comput. Sci., {\bf 65} (2) (1989), p. 153--169.

\bibitem{DT2}
J.-M. Dumont and A. Thomas,
\newblock  {\it Digital sum moments and substitutions,}
\newblock Acta Arith., {\bf 64}  (1993), p. 205--225.

\bibitem{EZ}
M. Edson and L.Q. Zamboni,
\newblock {\it On the number of partitions of an integer in the $m$-bonacci base,}
\newblock Num\'eration, pavages, substitutions.
\newblock Ann. Inst. Fourier (Grenoble) {\bf 56} (2006), no. 7, p. 2271--2283.

\bibitem{E}
R. Ellis,
\newblock{ \it Distal transformation groups}
\newblock{Pac. J. Math.} {\bf 8} (1958), p. 401--405.

\bibitem{FA}
D. G. Fon-Der-Flaass and  A. E. Frid,
\newblock {\it On periodicity and low complexity of infinite permutations,}
\newblock European J. of Combin. {\bf 28} (2007), p. 2106--2114.

\bibitem{F}
H. Furstenberg,
\newblock {\it Recurrence in Ergodic Theory and Combinatorial Number Theory,}
\newblock Princeton University Press, 1981.

\bibitem{H}
N. Hindman,
\newblock {\it Finite sums of sequences within cells of a partition of $\nats,$}
\newblock J. Combinatorial Theory (Series A) {\bf 17} (1974), p. 1--11.

\bibitem{H2}
N. Hindman,
\newblock {\it Ultrafilters and Ramsey theory-an update}
\newblock Set theory and its Applications (J. Steprans \& S. Watson eds.)
\newblock Lecture Notes in Mathematics {\bf 1401}, Springer-Verlag, 1989, p. 97--118.

\bibitem{HLS}
N. Hindman, I. Leader and D. Strauss,
\newblock {\it Infinite partition regular matrices: solutions in central sets,}
\newblock Trans. Amer. Math. Soc. {\bf 355} (2003), p. 1213--1235.



\bibitem{HS}
N. Hindman and D. Strauss,
\newblock {\it Algebra in the Stone-\v Cech compactification. Theory and applications,}
\newblock de Gruyter Expositions in Mathematics {\bf 27}
\newblock Walter de Gruyter \& Co., Berlin, 1998.

\bibitem{HS2}
N. Hindman and D. Strauss,
\newblock {\it A simple characterization of sets satisfying the Central Sets Theorem,}
\newblock New York J. Math {\bf 15} (2009), p. 405--413.

\bibitem{K1}
T. Kamae,
\newblock {\it Uniform sets and super-stationary sets over general alphabets,}
\newblock  Ergodic Theory \& Dynam. Systems
\newblock {\bf 31} (2011), p. 1445--1461.

\bibitem{K2}
T. Kamae,
\newblock {\it Behavior of various complexity functions,}
\newblock Theoret. Comput. Sci.
\newblock {\bf 420} (2012), p. 36--47.



  \bibitem{Lothaire1983book}
M.~Lothaire,
\newblock {\em Algebraic Combinatorics on Words}
\newblock Cambridge UK, Cambridge University Press, 2002.


\bibitem{MorHed1940}
M.~Morse and G.A. Hedlund.
\newblock {\it Symbolic {D}ynamics {II}: {S}turmian trajectories,}
\newblock Amer. J. Math., {\bf 62} (1) (1940), p. 1--42.





\bibitem{Zeck}
E. Zeckendorff,
\newblock {\it Repr\'esentation des nombres naturels par une somme de nombres de
Fibonacci ou de nombres de Lucas,}
\newblock  Bull. Soc. Royale Sci. Li\`ege,  {\bf 42} (1972), p.
179--182.
 \end{thebibliography}
\end{document}